\newcommand{\fracinline}[2]{\raisebox{0.4ex}{$#1$} / \raisebox{-0.7ex}{$#2$}}
\title{Friezes and continuant polynomials with parameters}
\author{V\'eronique Bazier-Matte\footnotemark[1] \and David Racicot-Desloges\footnotemark[1] \and Tanna Sanchez McMillan\footnotemark[1]}
\date{}
\begin{document}
\maketitle
\footnotetext[1]{D\'epartement de Math\'ematiques, Facult\'e des Sciences, Universit\'e de Sherbrooke,
Qu\'ebec, Canada, J1K 2R1}
\begin{abstract} Frieze patterns (in the sense of Conway and Coxeter [CC73]) are related to cluster algebras of type $\mathbb{A}$ and to signed continuant polynomials. In view of studying certain classes of cluster algebras with coefficients, we extend the concept of signed continuant polynomial to define a new family of friezes, called $c-$friezes, which generalises frieze patterns. Having in mind the cluster algebras of finite type, we identify a necessary and sufficient condition for obtaining periodic $c-$friezes. Taking into account the Laurent phenomenon and the positivity conjecture, we present ways of generating $c-$friezes of integers and of positive integers. We also show some specific properties of $c-$friezes. \end{abstract}
\section{Introduction}

Frieze patterns were introduced by Coxeter [Cox71] in 1971 and studied by Conway and Coxeter [CC73] in 1973. Here is an example of a frieze pattern over $\mathbb{Q}$: 

\[ \begin{array}{ccccccccccccccccccccccccc}
&& 0 && 0 && 0 && 0 && 0 && 0 && 0 && 0 && 0\\
& && 1 && 1 && 1 && 1 && 1 && 1 && 1 && 1\\
&& 2 && 1 && 3 && 1 && 2 && 2 && 1 && 3 && 1 \\
&\ldots && 1 && 2 && 2 && 1 && 3 && 1 && 2 && 2 && \ldots\\
&& 1 && 1 && 1 && 1 && 1 && 1 && 1 && 1 && 1 \\
& && 0 && 0 && 0 && 0 && 0 && 0 && 0 && 0
\end{array} \]

One can observe that it occupies a finite number (here $6$) of infinite rows in the discrete plane $\mathbb{Z}^2$. A frieze pattern over a field $K$ is a function $f$ that associates to specific points of the discrete plane $\mathbb{Z}^2$ elements of $K$, respecting the condition that every diamond shape

\[ \begin{array}{ccccccc}
&& f(a) && \\
& f(b) && f(c) \\
&& f(d) && 
\end{array} \]
\\
is such that $f(b)f(c)-f(a)f(d)=1$, together with the conditions that the first and last rows consist of zeros, while the second and penultimate rows consist of ones. A frieze pattern is said to be of order $n$ if it occupies $n+4$ rows in the discrete plane. Thus, in the above example, the frieze pattern is of order $n=2$.

It is shown in [CC73] that a frieze pattern is entirely determined by its elements in a diagonal or, alternatively, by its elements in the third row. Moreover, frieze patterns are always periodic. Let $x_i$, with $1 \leq i \leq n+3$, denote a family of $n+3$ consecutive elements in the third row of a frieze pattern. Conway and Coxeter showed in [CC73] that the value of every element in a frieze pattern is the evaluation of a signed continuant polynomial (see [AR12, Dup12]) in the variables $x_i$, signed continuant polynomials being a variant of the continuant polynomials of Euler (see [Wed32], p.133 and [Coh85], p.116). 

Our motivation for studying friezes comes from the theory of cluster algebras, introduced in 2002 by Fomin and Zelevinsky [FZ02] in order to understand the combinatorial aspects of total positivity [Lus98] and dual canonical bases [Lus90]. Cluster algebras were shown to have links with various mathematical areas, including -- but not limited to -- Lie theory [GLS13] and quiver representations [Kel10]. In [ARS10], Assem, Reutenauer and Smith use frieze patterns to calculate cluster variables. Indeed, let us consider a frieze pattern of order $n$ having a diagonal consisting of the initial cluster variables of a cluster algebra $\textit{A}$ of type $\mathbb{A}$. Then, the frieze pattern (without considering the rows of zeros and ones) consists of all the cluster variables of $\textit{A}$.

This article lays the groundwork for further study of cluster algebras of type $\mathbb{A}$. For instance, using the properties of the $c-$friezes here introduced, Fernando Borges proves in a forthcoming paper that the cluster variables of a $c-$cluster algebra are in bijection with the indecomposable objects of the cluster category associated to a quiver of type $\mathbb{A}_n$ [Bor13].

In this article, we first extend the concept of continuant polynomials of Euler to the $c-$continuant polynomials, which generalise the signed continuant polynomials (generalised Chebychev polynomials) of [CC73, AR12, Dup12]. We then proceed to the definition of a generalised version of frieze patterns (called \textit{$c-$friezes}), using the $c-$continuant polynomials. Afterwards, we study the periodicity of $c-$friezes, explaining why the frieze patterns of [CC73] are always periodic, while others never are. We also explain, in the case where the base field is $\mathbb{Q}$, when and how it is possible to obtain a $c-$frieze of integers and of positive integers, thus generalising results of Conway and Coxeter. We finally present transformations for obtaining $(-c)-$friezes from $c-$friezes and $c-$friezes of order $n+1$ from $c-$friezes of order $n$ and conversely.

\section{Continuant polynomials with parameters} 

Throughout the article, we let $K$ denote an arbitrary field and $\{x_i\}_{i \in \mathbb{Z}}$ a finite or countable set of indeterminates.

\subsection{}

\theoremstyle{definition}\newtheorem*{continuing}{Definition}
\begin{continuing}
Let $c \in K$ be a fixed nonzero scalar. The \textit{continuant polynomial with parameter $c$} (or, briefly, the \textit{$c-$continuant polynomial}) of order $k$ is defined by the recurrence relation 
$$P_k^c(x_1,x_2,\dots,x_k)=x_kP_{k-1}^c(x_1,x_2,\dots,x_{k-1})+cP_{k-2}^c(x_1,x_2,\dots,x_{k-2}) \text{ if } k\geq 1$$ together with the initial conditions $P_{-1}^c=0, P_0^c=1$.
\end{continuing}
For $k\geq 1$, the order $k$ of $P_k^c$ indicates the number of (always consecutive) indeterminates in the polynomial. Thus, for instance, we have

\begin{align*}
& P_1^c(x_1)=x_1 \\
& P_2^c(x_1,x_2)=x_1x_2+c \\
& P_3^c(x_1,x_2,x_3)=x_1x_2x_3+c(x_1+x_3).
\end{align*}

Whenever the parameter $c$ is fixed throughout a discussion, we simply write $P_k^c=P_k$.

\theoremstyle{remark}\newtheorem*{EulerCheb}{Remark}
\begin{EulerCheb}
The continuant polynomials of Euler are obtained by setting $c=1$, and the signed continuant polynomials (or generalised Chebyshev polynomials) of [CC73, AR12, Dup12] are obtained by setting $c=-1$.
\end{EulerCheb}

\theoremstyle{remark}\newtheorem*{remarkspolynomials}{Remark}
\begin{remarkspolynomials}
Let $b,c$ be two nonzero scalars and consider the polynomials $P_k^{\prime}$ given by the recurrence relation $P_k^{\prime}(x_1,x_2,\dots,x_k)=bx_kP_{k-1}^{\prime}(x_1,x_2,\dots,x_{k-1})+cP_{k-2}^{\prime}(x_1,x_2,\dots,x_{k-2})$ with the initial conditions $P_{-1}^{\prime}=0, P_0^{\prime}=1$. A straightforward induction on $k$ shows that $P_k^{\prime}(x_1,x_2,\dots,x_k)=P_k^c(bx_1,bx_2,\dots,bx_k)$. Thus, the study of $P_k^{\prime}$ reduces to the study of $P_k$.
\end{remarkspolynomials}
\subsection{}
We collect elementary properties of $c-$continuant polynomials in the following proposition, which generalises results in [CC73, AR12].

\theoremstyle{plain}\newtheorem*{Friezess}{Proposition}
\begin{Friezess}\label{Frisess}
Let $c,d$ be nonzero scalars and $k,l \geq 1$. Then,
\begin{enumerate}[a)]
\item $ P_k^c(x_1,x_2,\dots,x_k)=\begin{vmatrix}
 x_1    & -c      & 0       & \ldots & 0 & 0\\
 1      & x_{2} & -c      & \ldots & 0 & 0\\
 0      & 1       & x_{3} & \ldots & 0 & 0 \\
 \vdots & \vdots  & \vdots  & \ddots & \vdots & \vdots\\
 0      &0  & 0       & \ldots      & x_{k-1} & -c \\
  0      &0  & 0       & \ldots   & 1   & x_k 
 \end{vmatrix} $;
\item $P_k^c(x_1,x_2,\dots,x_k)=x_1P_{k-1}^c(x_2,x_3,\dots,x_k)+cP_{k-2}^c(x_3,x_4,\dots,x_k)$;
\item \begin{flushright} $P_{k+l}^c(x_1,x_2,\dots,x_{k+l}) = P_k^c(x_1,x_2,\dots,x_k)P_l^c(x_{k+1},x_{k+2},\dots,x_{k+l})  \hfill $ \\ $+cP_{k-1}^c(x_1,\dots,x_{k-1})P_{l-1}^c(x_{k+2},\dots,x_{k+l})$; \end{flushright} 
\item $\dfrac{P_k^c(x_1,...,x_{k})}{P_{k-1}^c(x_{2},...,x_{k})}=x_1+\dfrac{c}{x_{2}+\frac{c}{\dots + \frac{c}{x_{k}}}}$;
\item $P_k^c(x_1,x_2,...,x_{k})P_k^c(x_{2},x_3,...,x_{k+1})$\\
  $ \begin{array}{lll}
  && -P_{k-1}^c(x_2,x_3,...,x_{k})P_{k+1}^c(x_1,x_2,...,x_{k+1})=(-c)^k
  \end{array} $;
\item $P_k^{cd^2}(dx_1,\ldots dx_{k}) = d^kP_k^{c}(x_1,\ldots x_{k})$.
\end{enumerate}
\end{Friezess}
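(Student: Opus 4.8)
The plan is to prove part f) by induction on $k$, using the defining recurrence relation for the $c$-continuant polynomials. For each $k$ the claim to establish is $P_k^{cd^2}(dx_1,\ldots,dx_k) = d^k P_k^c(x_1,\ldots,x_k)$; since the recurrence expresses $P_k$ in terms of both $P_{k-1}$ and $P_{k-2}$, I would set this up as a two-step induction, proving the identity for all $k \geq -1$ rather than only for $k \geq 1$. First I would dispose of the two base cases using the initial conditions $P_{-1}^c = 0$ and $P_0^c = 1$, which hold for every nonzero parameter: for $k=-1$ both sides vanish, and for $k=0$ both sides equal $1$ (since $d^0 = 1$). These are precisely the orders at which the recurrence cannot be invoked.

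For the inductive step I would assume the identity at orders $k-1$ and $k-2$ and apply the defining recurrence to $P_k^{cd^2}$ evaluated at the scaled arguments $dx_1,\ldots,dx_k$ with parameter $cd^2$. This gives
$$P_k^{cd^2}(dx_1,\ldots,dx_k) = (dx_k)\,P_{k-1}^{cd^2}(dx_1,\ldots,dx_{k-1}) + cd^2\,P_{k-2}^{cd^2}(dx_1,\ldots,dx_{k-2}).$$
Substituting the inductive hypotheses $P_{k-1}^{cd^2}(dx_1,\ldots,dx_{k-1}) = d^{k-1}P_{k-1}^c(x_1,\ldots,x_{k-1})$ and $P_{k-2}^{cd^2}(dx_1,\ldots,dx_{k-2}) = d^{k-2}P_{k-2}^c(x_1,\ldots,x_{k-2})$, each term acquires the common factor $d^k$; factoring it out leaves exactly the recurrence defining $P_k^c(x_1,\ldots,x_k)$, which closes the induction.

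There is no serious obstacle here, and I expect the argument to be entirely routine; the only point demanding care is bookkeeping. Because the induction is two-step, both base cases must be checked and the hypothesis must be invoked at orders $k-1$ \emph{and} $k-2$. The one genuinely instructive observation is the matching of the powers of $d$, namely $d\cdot d^{k-1} = d^k$ on the first term and $d^2\cdot d^{k-2} = d^k$ on the second: this balance is exactly what forces the parameter to be $cd^2$, explaining why the \emph{squared} scaling of $c$ is the natural companion to a linear scaling of the variables. As an alternative I could instead use the tridiagonal determinant formula of part a), writing the matrix representing $P_k^{cd^2}(dx_1,\ldots,dx_k)$ and factoring it as a product $D_1 A D_2$ of the matrix $A$ from part a) with two diagonal matrices whose determinants multiply to $d^k$; but this route is more cumbersome than the direct induction and I would keep it only as a remark.
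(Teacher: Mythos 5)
Your treatment of part f) is correct and is exactly the paper's route: the paper's entire proof of the proposition reads ``a) is done by induction on $k$ \dots{} and d), e), f) by induction on $k$'', so your two-step induction with the base cases $P_{-1}^c=0$, $P_0^c=1$ and the power count $d\cdot d^{k-1}=d^2\cdot d^{k-2}=d^k$ is precisely the intended argument for f). Your side remark about the determinant alternative also checks out: the required diagonal matrices exist, e.g.\ $\beta_i=d^{\,i-1}$ and $\alpha_i=d^{\,2-i}$ give $\alpha_i\beta_i x_i = dx_i$ on the diagonal, $\alpha_i\beta_{i+1}(-c)=-cd^2$ above it, $\alpha_i\beta_{i-1}=1$ below it, and $\prod_i\alpha_i\beta_i=d^k$, as needed.

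The genuine gap is one of scope: the statement is a six-part proposition, and your proposal proves only part f), leaving a) through e) untouched. These are not consequences of f); in particular, e) is the identity $P_k^c(x_1,\dots,x_k)P_k^c(x_2,\dots,x_{k+1})-P_{k-1}^c(x_2,\dots,x_k)P_{k+1}^c(x_1,\dots,x_{k+1})=(-c)^k$ that becomes the $(-c)$-modular rule on which most of the paper rests, and c) is the splitting identity used heavily in the proof of the order-raising lemma of section 7, so omitting them is substantive. The paper handles them in the same spirit as your f): a) by induction on $k$, expanding the tridiagonal determinant along the last row via the defining recurrence; b) then falls out of a) by expanding instead along the first row; c) by induction on $l$, applying the recurrence in the last variable; d) and e) by induction on $k$. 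Each of these inductions is short and routine, but since none appears in your write-up, the proposal is incomplete as a proof of the stated proposition, even though as a proof of part f) alone it is complete and coincides with the paper's approach.
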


\begin{proof}
a) is done by induction on $k$ and implies b) upon developing along the first row. Then, one proves c) by induction on $l$, and d), e), f) by induction on $k$.
\end{proof}

\theoremstyle{remark}\newtheorem*{cpositive}{Remark}
\begin{cpositive}
When $K \subseteq \mathbb{R}$, if $x_1, x_2, \dots, x_{k}$ are strictly positive and $c>0$, then $$P_k^c(x_1,x_2,\dots,x_k)>0 ~.$$ It is easily seen from proposition~\ref{Frisess} a), by a straightforward induction on $k$.
\end{cpositive}

\subsection{} We need the following lemma in section 7. It says that, by switching the sign of $c$ and of one out of two indeterminates, the $c-$continuant polynomial is preserved, up to sign.

\theoremstyle{plain}\newtheorem*{UpToSigns}{Lemma}
\begin{UpToSigns}\label{Sign}
Let $\{x_j^{\prime}\}_{j \in \mathbb{Z}}$ be a finite or countable set of indeterminates, such that $x_{2l}^{\prime}=x_{2l}$ and $x_{2l+1}^{\prime}=-x_{2l+1}$ for all $l$. We then have that
\begin{enumerate}[a)]
\item if $k\equiv 0$ mod $4$, then, for all $j$, $P_k^{-c}(x_j^{\prime}, \dots, x_{j+k-1}^{\prime})=P_k^c(x_j,\dots,x_{j+k-1})$;
\item if $k\equiv 1$ mod $4$, then, \\
$
P_k^{-c}(x_j^{\prime}, \dots, x_{j+k-1}^{\prime}) = 
\begin{cases} 
P_k^c(x_j,\dots,x_{j+k-1}) & $for $j$ even$ \\ 
-P_k^c(x_j,\dots,x_{j+k-1}) & $for $j$ odd $
\end{cases}$;
\item if $k\equiv 2$ mod $4$, then, for all $j$, $P_k^{-c}(x_j^{\prime}, \dots, x_{j+k-1}^{\prime})=-P_k^c(x_j,\dots,x_{j+k-1})$;
\item if $k\equiv 3$ mod $4$, then, \\
$
P_k^{-c}(x_j^{\prime}, \dots, x_{j+k-1}^{\prime}) = 
\begin{cases} 
-P_k^c(x_j,\dots,x_{j+k-1}) & $for $j$ even$ \\ 
P_k^c(x_j,\dots,x_{j+k-1}) & $for $j$ odd $
\end{cases}$.
\end{enumerate}
\end{UpToSigns}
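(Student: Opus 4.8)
The plan is to replace the four-case statement by a single uniform identity and then establish that identity by induction on $k$. Write $x_i' = (-1)^i x_i$, so that even-indexed arguments are unchanged and odd-indexed ones are negated, and let $b(k,j)$ denote the number of odd integers in the window $\{j, j+1, \dots, j+k-1\}$, equivalently the number of negated arguments among $x_j', \dots, x_{j+k-1}'$. I claim that
\[
P_k^{-c}(x_j', \dots, x_{j+k-1}') = (-1)^{b(k,j)}\, P_k^c(x_j, \dots, x_{j+k-1}).
\]
Granting this, the four cases follow from a direct parity count. If $k$ is even then $b(k,j) = k/2$ for every $j$, which is even when $k \equiv 0$ and odd when $k \equiv 2 \pmod{4}$; if $k$ is odd then $b(k,j) = (k-1)/2$ for $j$ even and $(k+1)/2$ for $j$ odd, and evaluating the parity of these against $k \equiv 1$ and $k \equiv 3 \pmod{4}$ reproduces exactly the sign table in a)--d).

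For the uniform identity I would induct on $k$, using the defining recurrence of the $c$-continuant polynomial applied to the arguments $x_j', \dots, x_{j+k-1}'$ with parameter $-c$:
\[
P_k^{-c}(x_j', \dots, x_{j+k-1}') = x_{j+k-1}'\, P_{k-1}^{-c}(x_j', \dots, x_{j+k-2}') - c\, P_{k-2}^{-c}(x_j', \dots, x_{j+k-3}').
\]
The base cases $k = 0$ and $k = 1$ are immediate, since $P_0^{-c} = 1 = P_0^c$ with $b(0,j) = 0$, and $P_1^{-c}(x_j') = x_j' = (-1)^j x_j$ with $(-1)^{b(1,j)} = (-1)^j$. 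In the inductive step I substitute $x_{j+k-1}' = (-1)^{j+k-1} x_{j+k-1}$, apply the induction hypothesis to the two shorter polynomials (both windows start at $j$), and compare the result term by term with the defining recurrence for $P_k^c(x_j, \dots, x_{j+k-1})$. Matching the two terms reduces the whole step to the two sign identities $(-1)^{b(k,j)} = (-1)^{j+k-1}(-1)^{b(k-1,j)}$ and $(-1)^{b(k,j)} = -(-1)^{b(k-2,j)}$.

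Both identities are elementary consequences of how $b$ changes with the window. Enlarging the window from length $k-1$ to length $k$ adjoins the single index $j+k-1$, so $b(k,j) - b(k-1,j)$ equals $1$ or $0$ according as $j+k-1$ is odd or even, i.e.\ it has the parity of $j+k-1$; this gives the first identity. Enlarging from length $k-2$ to length $k$ adjoins the two consecutive indices $j+k-2$ and $j+k-1$, exactly one of which is odd, so $b(k,j) = b(k-2,j) + 1$; this gives the second identity, whose extra minus sign is precisely the one contributed by the $-c$ in the recurrence, which is why the bookkeeping closes. The only real subtlety is organizational: recognizing the single packaging $(-1)^{b(k,j)}$ of the period-four behaviour in $k$ and the parity dependence in $j$, and being careful that $b$ is always counted on the window appropriate to the current polynomial as the recurrence shifts the index range. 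One could instead argue conceptually from the monomial expansion of a continuant, in which every factor of $c$ accompanies a pair of consecutive arguments containing exactly one negated variable, so that every monomial acquires the same global sign $(-1)^{b(k,j)}$; but since that expansion is not among the properties proved above, I would carry out the clean induction rather than invoke it.
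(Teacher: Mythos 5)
Your proof is correct and takes essentially the same route as the paper, whose proof of this lemma is simply an induction on $k$ via the defining recurrence; your uniform sign $(-1)^{b(k,j)}$ is just a clean bookkeeping device for the same case analysis, and both the base cases and the two parity identities you reduce to are verified correctly. (Your alternative sketch via the monomial expansion also parallels the paper's own remark that the lemma follows from the homogeneity of $P_k^c$ with respect to the gradings assigning degree $1$ to $c$ and the $x_{2l}$, respectively to $c$ and the $x_{2l+1}$.)
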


\begin{proof}
This is done by induction on $k$.
\end{proof}

\theoremstyle{remark}\newtheorem*{homogeneous}{Remark}
\begin{homogeneous}
This lemma would also follow from the observation that $P_k^c$ is homogeneous with respect to the grading which assigns degree $1$ to $c$ and the $x_{2l}$, and degree $0$ to the $x_{2l+1}$, and also homogeneous with respect to the the grading which assigns degree $1$ to $c$ and the $x_{2l+1}$, and degree $0$ to the $x_{2l}$.
\end{homogeneous}

%%%%%%%%%%%%%%%%%%%%%%%%%%%%%%%%%%%%%%%%%%%%%%%%%%%%%%%%%%%

\section{c-friezes}

The objective of this section is to introduce the notion of $c-$friezes and prove some of their properties that will be used in the sequel.

Throughout this section, the parameter $c$ is fixed and thus the notation $P_k$ is used, rather than $P^c_k$. Also, $\{x_i\}_{i \in \mathbb{Z}}$ is used to denote a set of elements in a field $K$. As we shall see in definition~\ref{frised}, this set is algebraically dependent.

\subsection{} 
For $n \geq 1$, we denote by $\mathbb{B}_n$ the set of all pairs $(i,j) \in \mathbb{Z}^2$ such that $-2 \leq j-i \leq n+1$. Such pairs are identified with the corresponding points in the discrete plane and one agrees to picture these pairs as follows:

\begin{center}
  $\cdots$
$
\begin{smallmatrix}%{ccccccc}
(i-1,i-3) 		& 		&		(i,i-2) 	& 		&		(i+1, i-1) 		& 		& 		(i+2,i)		&		&	\\
&		(i-1, i-2)		&		&		(i,i-1)		&		&		(i+1,i)		&		&		(i+2,i+1)	\\
(i-2,i-2)		&		&		(i-1,i-1)		&		&		(i,i)		&		&		(i+1,i+1)		&		&\\
&		(i-2,i-1)		&		&		(i-1,i)		&		&		(i,i+1)		&		&		(i+1,i+2) \\
\vdots		&		&		\vdots		&		&		\vdots		&		&		\vdots		&		&	\\
\\
\end{smallmatrix}
$
$\cdots$
\end{center}
We agree to call the set of all elements of the form $(i,i+k-1)$ the \textit{row $k$} of $\mathbb{B}_n$. Thus, $\mathbb{B}_n$ occupies $n+4$ rows in the discrete plane labeled from $-1$ to $n+2$. 
$ \\
\\ $
\indent We now proceed to define $c-$friezes.
\theoremstyle{definition}\newtheorem*{frised}{Definition}
\begin{frised}\label{frised}
\begin{enumerate}[a)]
$\qquad$\item Let $n \geq 1$. A set $\{x_i\}_{i \in \mathbb{Z}}$ is called \textit{$n-$admissible} if, for any $i \in \mathbb{Z}$, we have
$$P_{n+2}(x_i, \dots, x_{i+n+1})=0.$$
\item Let $n \geq 1$ and an $n-$admissible set $\{x_i\}_{i \in \mathbb{Z}}$. A \textit{$c-$frieze of order n} is a function $f:\mathbb{B}_n \rightarrow K$ such that, for all $(i,j) \in \mathbb{B}_n$, we have
$$f(i,j)=P_{j-i+1}(x_i, \dots, x_j).$$
\end{enumerate}
\end{frised}

From now on, we agree to designate the image of row $k$ of $\mathbb{B}_n$, that is elements of the form $f(i,i+k-1)$, by row $k$ of the $c-$frieze $f$.

\theoremstyle{remark}\newtheorem*{rown}{Remarks}
\begin{rown}
\begin{enumerate}[a)]
Let $f$  be a $c-$frieze of order $n$. Then,
\item For any $i$, we have $f(i,i-2)=P_{-1}=0$. Thus, in a $c-$frieze, both row $-1$ and row $n+2$ consist of zeros.
\item For any $i$, we have $f(i,i-1)=P_0=1$. Thus, in a $c-$frieze, row $0$ consists of ones.
\item For any $i$, we have $f(i,i)=P_1(x_i)=x_i$. Thus, in a $c-$frieze, row $1$ consists of the $x_i$.
\end{enumerate}
\end{rown}

\subsection{}
Since $c-$friezes are built using $c-$continuant polynomials, they share the same properties.
\theoremstyle{plain}\newtheorem*{frisep}{Lemma}
\begin{frisep}\label{frisep}
Let $i$, $j \in \mathbb{Z}$ be such that $0 \leq j-i \leq n+1$.
\begin{enumerate}[a)]
\item $f(i,j)=x_jf(i,j-1)+cf(i,j-2)$;
\item $f(i,j)=x_if(i+1,j)+cf(i+2,j)$;
\item $f(i,j)=
 \begin{vmatrix}
 x_i    & -c      & 0       & \ldots & 0 & 0\\
 1      & x_{i+1} & -c      & \ldots & 0 & 0\\
 0      & 1       & x_{i+2} & \ldots & 0 & 0 \\
 \vdots & \vdots  & \vdots  & \ddots & \vdots & \vdots \\
 0      & 0  & 0       & \ldots & x_{j-1} & -c  \\
 0      & 0  & 0       & \ldots & 1       & x_j 
 \end{vmatrix} $;
\item $ f(i,j-1)f(i+1,j) - f(i+1,j-1) f(i,j) = (-c)^{j-i}$.
\end{enumerate}
\end{frisep}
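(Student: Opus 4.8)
The plan is to read off each of the four statements from the corresponding property of the $c$-continuant polynomials already collected in Proposition~\ref{Frisess}, using nothing more than the definition $f(i,j)=P_{j-i+1}(x_i,\dots,x_j)$. Once the correct instance of the proposition is selected, no computation remains: the entire task reduces to setting the order equal to $j-i+1$ (or to $j-i$, for part d)) and checking that the indeterminates and their index ranges line up. I would also note at the outset that, throughout the stated range $0\le j-i\le n+1$, every point occurring on the right-hand sides lies in $\mathbb{B}_n$, so all the terms $f(\cdot,\cdot)$ are defined.

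For a), I would invoke the defining recurrence of $P_k$ itself. Putting $k=j-i+1\ge 1$, the definition gives $P_{j-i+1}(x_i,\dots,x_j)=x_jP_{j-i}(x_i,\dots,x_{j-1})+cP_{j-i-1}(x_i,\dots,x_{j-2})$; since $f(i,j-1)=P_{j-i}(x_i,\dots,x_{j-1})$ and $f(i,j-2)=P_{j-i-1}(x_i,\dots,x_{j-2})$, this is exactly the claim, the endpoint $j-i=0$ being handled by the conventions $P_0=1$, $P_{-1}=0$ (that is, by rows $0$ and $-1$ of the frieze). Part b) is the mirror image, obtained from Proposition~\ref{Frisess} b), which expands $P_k$ from the left: with $k=j-i+1$ it reads $P_{j-i+1}(x_i,\dots,x_j)=x_iP_{j-i}(x_{i+1},\dots,x_j)+cP_{j-i-1}(x_{i+2},\dots,x_j)$, and recognizing $f(i+1,j)$ and $f(i+2,j)$ on the right finishes it. Part c) is immediate from Proposition~\ref{Frisess} a): substituting the order $j-i+1$ and the variables $x_i,\dots,x_j$ into the tridiagonal determinant reproduces $f(i,j)$ verbatim.

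The only part that calls for care is d), which I would derive from Proposition~\ref{Frisess} e). The key is to pick the right instance: take order $k=j-i$ and shift the indeterminates so that they begin at $x_i$, which turns Proposition~\ref{Frisess} e) into
$$P_{j-i}(x_i,\dots,x_{j-1})\,P_{j-i}(x_{i+1},\dots,x_j)-P_{j-i-1}(x_{i+1},\dots,x_{j-1})\,P_{j-i+1}(x_i,\dots,x_j)=(-c)^{j-i}.$$
Identifying the four factors with $f(i,j-1)$, $f(i+1,j)$, $f(i+1,j-1)$ and $f(i,j)$ respectively then yields the stated identity. The one thing to watch is the asymmetric index shift between the two middle polynomials — the factor of order $j-i-1$ starts at $x_{i+1}$, whereas the factor of order $j-i+1$ starts at $x_i$ — and it is exactly this that produces the pairing $f(i+1,j-1)f(i,j)$ rather than $f(i,j-1)f(i+1,j)$. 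Getting this bookkeeping right, and not any genuine analytic difficulty, is where the attention goes.
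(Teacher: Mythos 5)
Your proof is correct and follows exactly the paper's route: a) from the defining recurrence of the $c$-continuant polynomials, and b), c), d) from Proposition~\ref{Frisess} parts b), a), e) respectively, with the index shifts in d) checked correctly. You have simply made explicit the instantiation details that the paper's one-line proof leaves to the reader.
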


\begin{proof}
a) is the definition of $c-$continuant polynomial, while b), c) and d) follow from proposition~\ref{Frisess}, parts b), a) and e) respectively.
\end{proof}

Property d) is called the \textit{$(-c)-$modular rule}, or briefly the \textit{modular rule}. It is easily remembered by drawing a diamond consisting of 4 elements of the $c-$frieze, named a \textit{mesh}, arranged as follows:
\[
\begin{matrix}%{ccccc}
&			&			f(i+1,j-1)	&			&\\
f(i,j-1)	&			&			&			f(i+1,j)\\
&			&			f(i,j)		&			\qquad \qquad \qquad .\\
\end{matrix}
\]

\theoremstyle{remark}\newtheorem*{j-i}{Remark}
\begin{j-i}
The exponent $j-i$ of $(-c)$ is also the index of the row containing $f(i,j-1)$ and $f(i+1,j)$.
\end{j-i}

\theoremstyle{remark}\newtheorem{exFrise}{Example}
\begin{exFrise}\label{exFrise}
The following example is a $4-$frieze of order $n=2$ over the field of rationals $\mathbb{Q}$.\\
$ \\ $
\resizebox{\linewidth}{!}{
$ \begin{array}{cccccccccccccccccc}
\ \ k=-1 	&& 0 && 0 && 0 && 0 && 0 && 0 && 0\\
	k=0 	&& &1 && 1 && 1 && 1 && 1 && 1\\
	k=1 	&& \fracinline{-14}{5} && 2 && -3 && -1 && \fracinline{4}{5} && \fracinline{35}{8} && \fracinline{-32}{25} \\
	k=2 	&\ldots && \fracinline{-8}{5} && -2 && 7 &&\fracinline{16}{5} &&\fracinline{15}{2} && \fracinline{-8}{5} && \ldots\\
	k=3 	&& 10 && \fracinline{-32}{5} && 10 && \fracinline{-32}{5} && 10 && \fracinline{-32}{5} && 10 \\
	k=4 	&& &0 && 0 && 0 && 0 && 0 && 0
\end{array} $}
\end{exFrise}

\subsection{}
In the previous example, row $3$ consists of the rational numbers $\fracinline{-32}{5}$ and $10$, appearing alternatively. One can observe that $(10) (\fracinline{-32}{5}) = (-4)^3$, the exponent $3$ being the number of the row and $4$ being the parameter of the frieze. This is a general fact.

\theoremstyle{plain}\newtheorem*{st}{Lemma}
\begin{st}\label{st}
Given a $c-$frieze of order $n$, there exist $s,t \in K$, such that the penultimate row, that is the row $n+1$ of the $c-$frieze, is of the form

$$
\begin{array}{cccccccccccc}
 \cdots & s & t & s & t & s & t & s & t &\cdots .
\end{array} 
$$

Furthermore, $st=(-c)^{n+1}$.
\end{st}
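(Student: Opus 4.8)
The plan is to read off row $n+1$ as the sequence $a_i := f(i, i+n)$, $i \in \mathbb{Z}$, and to extract everything from a single application of the modular rule together with the admissibility hypothesis. First I would record that, since the underlying set $\{x_i\}$ is $n$-admissible (Definition~\ref{frised}), row $n+2$ of the frieze vanishes identically: for every $i$ we have $f(i, i+n+1) = P_{n+2}(x_i, \dots, x_{i+n+1}) = 0$.

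Next I would apply Lemma~\ref{frisep} d), the $(-c)$-modular rule, with $j = i+n+1$, so that the exponent $j-i = n+1$ matches row $n+1$. This gives
$$f(i, i+n)\, f(i+1, i+n+1) - f(i+1, i+n)\, f(i, i+n+1) = (-c)^{n+1}.$$
The subtracted product contains the factor $f(i, i+n+1)$, which lies in row $n+2$ and is therefore zero by admissibility. What survives is precisely
$$a_i\, a_{i+1} = (-c)^{n+1} \qquad \text{for every } i \in \mathbb{Z}.$$

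From this single identity both assertions follow at once. Because $c \neq 0$, the right-hand side is nonzero, so no $a_i$ can vanish; comparing $a_i a_{i+1} = (-c)^{n+1} = a_{i+1} a_{i+2}$ and cancelling the invertible factor $a_{i+1}$ yields $a_i = a_{i+2}$ for all $i$. Hence row $n+1$ assumes only two values, alternating, and setting $s := a_0$ and $t := a_1$ produces the pattern $\cdots\, s\, t\, s\, t\, \cdots$ with $st = a_0 a_1 = (-c)^{n+1}$.

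I would emphasise that there is essentially no hard step here: the entire content lies in choosing the indices in the modular rule so that one of its two diagonal products straddles the zero row, after which admissibility does all the work. The only point worth spelling out is the cancellation used to pass from the product formula to the $2$-periodicity, which is legitimate exactly because the standing hypothesis $c \neq 0$ forces every entry $a_i$ of the penultimate row to be nonzero.
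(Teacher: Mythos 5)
Your proof is correct and takes essentially the same route as the paper: both apply the $(-c)$-modular rule to the lowest mesh so that one diagonal product lands in the vanishing row $n+2$ (admissibility), obtain $f(i,i+n)f(i+1,i+n+1)=(-c)^{n+1}$, and deduce the alternation by cancelling the nonzero common factor --- a step the paper compresses into ``induction shows'' and you have merely spelled out. Incidentally, your form of the identity is the right one; the paper's proof contains a typo at this point, writing $f(i,i+n)f(i+1,i+n)$ where $f(i,i+n)f(i+1,i+n+1)$ is meant, as its own choice $s=f(i,i+n)$, $t=f(i+1,i+n+1)$ confirms.
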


\begin{proof}
Applying the modular rule to the lowest mesh

$$
\begin{matrix}%{ccccc}
&			&			f(i+1,i+n)	&			&\\
f(i,i+n)		&			&			&			f(i+1,i+n+1)\\
&			&			f(i,i+n+1)		&			&\\
\end{matrix}
$$
yields $(-c)^{n+1}=f(i,i+n)f(i+1,i+n)$, since $f(i,i+n+1)=0$. Set $s=f(i,i+n)$ and $t=f(i+1,i+n+1)$. Induction shows that, for any $k \in \mathbb{Z}$, $f(i+2k,i+n+2k)=s$ and $f(i+1+2k,i+n+1+2k)=t$.
\end{proof}

The notation $s,t$ for the two alternating elements of row $n+1$ will be adopted in the sequel. Without loss of generality, we shall assume $s=f(0,n)$ and $t=f(1,n+1)$. Thus, as shown before, we have $s=f(2i, n+2i)$ and $t=f(2i+1,n+2i+1)$ for all $i \in \mathbb{Z}$.

\subsection{}
Dually to lemma \ref{frisep}, it is also possible to express the elements of a $c-$frieze as a function of the elements appearing in the following rows, rather than the previous.

\theoremstyle{plain}\newtheorem*{lignen}{Lemma}
\begin{lignen} \label{rowk1} For $0 \leq k \leq n$,
\begin{enumerate}[a)]
\item $f(2i,2i+k-1) = \frac{f(2i-1,2i+k-1)f(2i,2i+n-1)}{t} + \frac{f(2i-2,2i+k-1)}{c}$;
\item $f(2i+1,2i+k) = \frac{f(2i,2i+k)f(2i+1,2i+n)}{s} + \frac{f(2i-1,2i+k)}{c}$;
%\item $f(2i,2i+k-1) = \frac{f(2i-2,2i+n-3)f(2i,2i+k)}{t} + \frac{f(2i,2i+k+1)}{c}$;
%\item $f(2i+1,2i+k) = \frac{f(2i-1,2i+n-2)f(2i+1,2i+k+1)}{t} + \frac{f(2i+1,2i+k+2)}{c}$.
\end{enumerate}
\end{lignen}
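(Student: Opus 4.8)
The plan is to derive both identities from the ``left--peeling'' recurrence of Lemma~\ref{frisep}~b), namely $f(i,j) = x_i f(i+1,j) + c f(i+2,j)$, combined with the $n$-admissibility of $\{x_i\}$, which guarantees that row $n+2$ of the frieze vanishes identically. Since the two statements are perfectly symmetric (one is obtained from the other by shifting the starting index by one and interchanging the roles of $s$ and $t$), I would prove a) in full and then observe that b) follows by the verbatim argument with the parities exchanged.

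For a), fix $i$ and $k$ with $0 \le k \le n$, and abbreviate $A = f(2i, 2i+k-1)$, $B = f(2i-1, 2i+k-1)$, $C = f(2i-2, 2i+k-1)$. First I would apply Lemma~\ref{frisep}~b) to $C$, which lies in row $k+2 \le n+2$ and whose index difference $k+1$ is within the admissible range, obtaining $C = x_{2i-2} B + c A$; solving for $A$ gives $A = \tfrac{C}{c} - \tfrac{x_{2i-2}}{c}\,B$. This already has the desired shape, so the only remaining task is to recognize the coefficient $-x_{2i-2}/c$ as $f(2i,2i+n-1)/t$.

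The key step is to extract this coefficient from admissibility. Since $(2i-2,\,2i+n-1)$ sits in row $n+2$, admissibility gives $f(2i-2, 2i+n-1) = P_{n+2}(x_{2i-2},\dots,x_{2i+n-1}) = 0$. Applying Lemma~\ref{frisep}~b) to this vanishing entry yields $0 = x_{2i-2}\,f(2i-1, 2i+n-1) + c\,f(2i, 2i+n-1)$. By Lemma~\ref{st} and the convention fixing $s,t$, we have $f(2i-1, 2i+n-1) = t$, and since $st = (-c)^{n+1} \ne 0$ we know $t \ne 0$; hence $-x_{2i-2}/c = f(2i, 2i+n-1)/t$. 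Substituting this into the expression for $A$ produces exactly the claimed identity.

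The argument for b) is identical: one applies Lemma~\ref{frisep}~b) first to $f(2i-1, 2i+k)$, and then to the vanishing entry $f(2i-1, 2i+n)$ in row $n+2$, using $f(2i, 2i+n) = s \ne 0$ to convert the coefficient $-x_{2i-1}/c$ into $f(2i+1, 2i+n)/s$. I do not anticipate a serious obstacle here; the only point requiring care is the bookkeeping of row indices, to ensure that every frieze value invoked genuinely lies in $\mathbb{B}_n$ (in particular that each entry fed to Lemma~\ref{frisep}~b) has index difference at most $n+1$) and that the divisions by $s$, $t$ and $c$ are all legitimate.
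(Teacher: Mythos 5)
Your argument is correct, and it takes a genuinely different route from the paper's. The paper proves a) by backward induction on $k$: the base case $k=n$ follows from $n$-admissibility together with $f(2i-1,2i+n-1)=t$, the case $k=n-1$ requires a separate computation via the modular rule, and the inductive step descends two rows at a time using the right-peeling recurrence $f(i,j)=x_jf(i,j-1)+cf(i,j-2)$ of lemma~\ref{frisep}~a). You instead give a direct, induction-free derivation: one application of the left-peeling recurrence of lemma~\ref{frisep}~b) to $f(2i-2,2i+k-1)$ yields $f(2i,2i+k-1)=\frac{f(2i-2,2i+k-1)}{c}-\frac{x_{2i-2}}{c}\,f(2i-1,2i+k-1)$, and a second application of the same recurrence to the vanishing row-$(n+2)$ entry $f(2i-2,2i+n-1)=0$ identifies the coefficient as $-x_{2i-2}/c=f(2i,2i+n-1)/t$. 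Your bookkeeping checks out: the index difference $k+1\le n+1$ keeps every invocation of lemma~\ref{frisep}~b) in range, and $t\neq 0$, $s\neq 0$ follow from $st=(-c)^{n+1}$ with $c\neq 0$. Moreover your computation is uniform in $0\le k\le n$, covering at once the cases $k=n$ and $k=n-1$ that the paper must handle separately before its two-step induction can start. What your route buys is brevity and a structural insight the inductive proof leaves hidden: the $k$-independent coefficient $f(2i,2i+n-1)/t$ in the statement is nothing but $-x_{2i-2}/c$, extracted once from admissibility of the top row. What the paper's route buys is mainly methodological uniformity, since the same two-term backward induction pattern recurs in its transvection lemma; for this particular statement your argument is the shorter and more transparent one, and, as you note, b) follows verbatim with $t$, $x_{2i-2}$ replaced by $s$, $x_{2i-1}$.
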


\begin{proof}
We prove a) by backward induction on $k$. For $k=n$, this follows from the facts that $f(2i-2,2i+n-1)=P_{n+2}(x_{2i-2}, x_{2i-1}, \dots, x_{2i+n-1})=0$ and $f(2i-1,2i+n-1)=t$. Because of the definition of the $c-$continuant polynomials, in order to apply induction, we need to consider the case $k=n-1$. By the modular rule, we have
$$f(2i-1,2i+n-2)f(2i,2i+n-1)-f(2i-1,2i+n-1)f(2i,2i+n-2)=(-c)^n.$$
Therefore, 
\[f(2i,2i+n-2)=\frac{f(2i-1,2i+n-2)f(2i,2i+n-1)-(-c)^n}{f(2i-1,2i+n-1)}.\]
Since $f(2i-1,2i+n-1)=t$ and $st=(-c)^{n+1}$, we find
\[f(2i,2i+n-2)=\frac{f(2i-1,2i+n-2)f(2i,2i+n-1)}{t}+\frac{s}{c}.\]
Since $s=f(2i-2,2i+n-2)$, we find
\[f(2i,2i+n-2)=\frac{f(2i-1,2i+n-2)f(2i,2i+n-1)}{t}+\frac{f(2i-2,2i+n-2)}{c}.\] \\
Assume it is true for $k$ and $k+1$. We shall prove it for $k-1$. Applying lemma \ref{frisep} a), we have
\[f(2i,2i+k)=x_{2i+k}f(2i,2i+k-1)+cf(2i,2i+k-2).\]
Therefore, 
\begin{align*}
& f(2i,2i+k-2) \\
&\qquad =\frac{f(2i,2i+k)}{c}-\frac{x_{2i+k}f(2i,2i+k-1)}{c} \\
&\qquad = \frac{f(2i-1,2i+k)f(2i,2i+n-1)}{tc}+\frac{f(2i-2,2i+k)}{c^2} \\
&\qquad \qquad -x_{2i+k}\frac{f(2i-1,2i+k-1)f(2i,2i+n-1)}{tc} \\
&\qquad \qquad -x_{2i+k}\frac{f(2i-2,2i+k-1)}{c^2} \\
&\qquad = \frac{f(2i,2i+n-1)}{t}\left[ \frac{f(2i-1,2i+k)-x_{2i+k}f(2i-1,2i+k-1)}{c}\right]  \\
&\qquad \qquad + \frac{1}{c}\left[ \frac{f(2i-2,2i+k)-x_{2i+k}f(2i-2,2i+k-1)}{c}\right]  \\
&\qquad = \frac{f(2i-1,2i+k-2)f(2i,2i+n-1)}{t} \\
&\qquad \qquad +\frac{f(2i-2,2i+k-2)}{c} \qquad.
\end{align*}
\item The proof of b) is similar and will be omitted.
\end{proof}

%%%%%%%%%%%%%%%%%%%%%%%%%%%%%%%%%%%%%%%%%%%%%%%%%%%%%%%%%%%
\section{Periodic $c-$friezes}

Cluster algebras of type $\mathbb{A}$ have a finite number of cluster variables (see [FZ03]). As a direct consequence, friezes of [CC73] are periodic. The objective of this section is to study the periodicity of $c-$friezes.

\subsection{}

\theoremstyle{definition}\newtheorem*{antiperiodic}{Definition}
\begin{antiperiodic} 
A $c-$frieze (or a row of a $c-$frieze) is called \textit{periodic} of \textit{period} $p$ if, for all elements $f(i,j)$ of this $c-$frieze (or this row, respectively), we have $f(i+p,j+p)=f(i,j)$ and $p$ is the least positive integer having this property. It is called \textit{antiperiodic} of \textit{period} $p$ if $f(i+p,j+p)=-f(i,j)$ and $p$ is the least positive integer having this property.
\end{antiperiodic}

Notice that if $f$ is antiperiodic of period $p$, then it is periodic of period $2p$.
\subsection{}
\theoremstyle{plain}\newtheorem*{transvection}{Transvection lemma}
\begin{transvection} \label{trans:box}
The elements of row $k$ are related to those of row $n-k+1$ by the following formulae:
\begin{enumerate}[a)]
\item $f(2i, 2i+k-1)=\frac{(-c)^{k}}{t}f(2i+k+1,2i+n+1)$ for all $i$;
\item $f(2i+1,2i+k)=\frac{(-c)^{k}}{s}f(2i+k+2,2i+n+2)$ for all $i$.
\end{enumerate}
\end{transvection}

\begin{proof} \
\begin{enumerate}[a)]
\item The statement is trivial for $k=-1$. For $k=0$, it follows from the facts that $f(2i,2i-1)=P_0=1$ and $f(2i+1,2i+n+1)=t$. Assume it is true for $k-1$ and $k$. We shall prove it for $k+1$. Applying lemma \ref{frisep} b), we have
$$f(2i+k,2i+n+1)=x_{2i+k}f(2i+k+1,2i+n+1)+cf(2i+k+2,2i+n+1).$$
Therefore,
\begin{align*}
&\frac{(-c)^{k+1}}{t}f(2i+k+2,2i+n+1) \\
& \qquad = \frac{(-c)^{k+1}}{ct}[f(2i+k,2i+n+1)-x_{2i+k}f(2i+k+1,2i+n+1)]\\
& \qquad = -(-cf(2i,2i+k-2)-x_{2i+k}f(2i,2i+k-1)\\
& \qquad = f(2i,2i+k)
\end{align*}
as required.
\item The proof is similar and will be omitted.
\end{enumerate}
\end{proof}

\subsection{} Unlike the friezes considered in [CC73], not all $c-$friezes are periodic. Our main result of this section says that rows of even order are always periodic and rows of odd order are periodic if $n$ is even.

\newtheorem*{periodicite}{Pseudo-periodicity theorem}
\begin{periodicite} \label{periodicity}
\ 
\begin{enumerate}[a)]
\item If row $k$ is even, then
$$f(2i, 2i+k-1)=f(2i+n+3,2i+k+n+2) \text{ for all } i \text{;} $$
$$f(2i+1,2i+k)=f(2i+n+4,2i+k+n+3) \text{ for all } i \text{.}$$

\item If row $k$ is odd, then
$$f(2i,2i+k-1)=\frac{(-c)^{n+1}}{t^2}f(2i+n+3,2i+k+n+2) \text{ for all } i \text{;} $$
$$f(2i+1,2i+k)=\frac{(-c)^{n+1}}{s^2}f(2i+n+4,2i+k+n+3) \text{ for all } i \text{.} $$

Furthermore, if $k$ is odd and $n$ is even, then
$$f(i,i+k-1)=f(i+2n+6,i+k+2n+5) \text{ for all } i \text{.} $$
\end{enumerate}
\end{periodicite}

\begin{proof}
\begin{enumerate}[a)] 
\item This is done by applying twice lemma \ref{trans:box}. At the second step, we use the fact that $k$ is even, writing $2j=2i+k$, so that, for all $i$:
\begin{align*}
f(2i,2i+k-1) &= \frac{(-c)^k}{t}f(2i+k+1,2i+n+1)\\
&= \frac{(-c)^k}{t}f(2j+1,2j+n-k+1)\\
&= \frac{(-c)^k}{t}\frac{(-c)^{n-k+1}}{s}f(2j+n-k+3,2j+n+2)\\
&= \frac{(-c)^{n+1}}{st}f(2i+n+3,2i+k+n+2)\\
&= f(2i+n+3,2i+k+n+2) \qquad.
\end{align*}
\item The proof of the first part of b) is similar and will be omitted. For the second part (when $n$ is even), we apply twice the first part of b).
\end{enumerate}
\end{proof}

\theoremstyle{remark}\newtheorem*{WhenPeriodic}{Remarks}
\begin{WhenPeriodic}
Note that all $c-$friezes with $n$ even are periodic. However, a $c-$frieze with $|s| \neq (-c)^{\frac{n+1}{2}} \neq |t|$ and with $n$ odd cannot be periodic. In fact, according to the first part of b) of the previous theorem, on rows of odd order, while half of the terms will always increase, the other half will always decrease.
\end{WhenPeriodic}

\subsection{}
\theoremstyle{plain}\newtheorem*{divisionn+3}{Corollary}
\begin{divisionn+3}
Rows of even order of a $c-$frieze have period a divisor of $n+3$, while rows of odd order have period a divisor of $2n+6$ if $n$ is even.
\end{divisionn+3}

\begin{proof}
For the first part, suppose that, contrary to our claim, the period $q$ of a row of even order is not a divisor of $n+3$. Then, by theorem~\ref{periodicity} a), we have $f(i,j)=f(i+q,j+q)=f(i+n+3,j+n+3)$. Let $p$ be the greatest common divisor of $q$ and $n+3$. Therefore, there exist integers $\alpha, \beta$, such that $p=\alpha q + \beta (n+3)$ and, hence, $f(i,j)=f((\alpha q + \beta (n+3))i, (\alpha q + \beta (n+3))j)$. Since $q$ is not a divisor of $n+3$, we have $p < q$, which is an absurdity.

We show the second part similarly, using $2n+6$ instead of $n+3$ and the second part of b) of theorem~\ref{periodicity}.
\end{proof}

\subsection{}
\theoremstyle{plain}\newtheorem*{consecelements}{Corollary}
\begin{consecelements}\label{consec}
A $c-$frieze is entirely determined by $n+3$ consecutive elements of the first row.
\end{consecelements}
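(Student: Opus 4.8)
The plan is to recover the entire first row, that is the whole sequence $\{x_m\}_{m\in\mathbb{Z}}$, from the given block of $n+3$ consecutive values. Once this sequence is known the whole $c$-frieze is determined, since by definition~\ref{frised} every entry is given by $f(i,j)=P_{j-i+1}(x_i,\dots,x_j)$. Hence it suffices to show that any $n+3$ (indeed, as we shall see, any $n+1$) consecutive values of the first row propagate to all the others.

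The engine of the reconstruction is the $n$-admissibility relation $P_{n+2}(x_m,\dots,x_{m+n+1})=0$, which I read as a two-way recurrence. Expanding the continuant along its last argument by the defining recurrence gives
\[
x_{m+n+1}\,P_{n+1}(x_m,\dots,x_{m+n})+c\,P_n(x_m,\dots,x_{m+n-1})=0,
\]
so that $x_{m+n+1}=-c\,P_n(x_m,\dots,x_{m+n-1})/P_{n+1}(x_m,\dots,x_{m+n})$ is an explicit function of the $n+1$ preceding values. Expanding instead along the first argument by proposition~\ref{Frisess}~b) gives
\[
x_{m}\,P_{n+1}(x_{m+1},\dots,x_{m+n+1})+c\,P_n(x_{m+2},\dots,x_{m+n+1})=0,
\]
which expresses $x_m$ in terms of the $n+1$ following values. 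Thus the first row can be extended one step to the right or to the left from any window of $n+1$ consecutive entries.

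The point that has to be checked is that these recurrences are actually solvable, i.e.\ that the denominators do not vanish. Each denominator is a value of $P_{n+1}$ at $n+1$ consecutive indeterminates, namely $P_{n+1}(x_m,\dots,x_{m+n})=f(m,m+n)$, which is an entry of row $n+1$ and therefore equals $s$ or $t$. By lemma~\ref{st} we have $st=(-c)^{n+1}$, and since $c\neq 0$ this forces $s\neq 0$ and $t\neq 0$. I expect this nonvanishing to be the only genuine obstacle; everything else is purely formal.

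Putting these together, I would argue by induction in both directions: starting from the prescribed $n+3$ consecutive entries (already more than the $n+1$ strictly required to start the recurrence), the two displayed formulas determine $x_m$ for every $m\in\mathbb{Z}$. This recovers the full first row, and hence, through $f(i,j)=P_{j-i+1}(x_i,\dots,x_j)$, the entire $c$-frieze.
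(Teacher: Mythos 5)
Your proof is correct, and it takes a genuinely different route from the paper's. The paper disposes of this corollary in one line using machinery already established: the pseudo-periodicity theorem~\ref{periodicity} extends the given window to the whole first row (the factors $\frac{(-c)^{n+1}}{s^2}$ and $\frac{(-c)^{n+1}}{t^2}$ are computable from the window, since $s$ and $t$ are values of $P_{n+1}$ on $n+1$ consecutive known entries), and lemma~\ref{frisep} then fills in all remaining rows. You instead bypass periodicity entirely and read the $n$-admissibility relation $P_{n+2}(x_m,\dots,x_{m+n+1})=0$ as a bilateral recurrence: expanding along the last variable (the defining recurrence) lets you march right, expanding along the first variable (proposition~\ref{Frisess}~b)) lets you march left, and the only genuine issue --- nonvanishing of the denominators --- you settle correctly by identifying them as $P_{n+1}(x_m,\dots,x_{m+n})=f(m,m+n)$, entries of row $n+1$, hence equal to $s$ or $t$, which are nonzero because $st=(-c)^{n+1}\neq 0$ by lemma~\ref{st}. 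Your argument is more elementary, using nothing beyond the definition, proposition~\ref{Frisess}~b) and lemma~\ref{st}, whereas the paper's route passes through the transvection lemma and the pseudo-periodicity theorem; moreover it yields the sharper statement that $n+1$ consecutive first-row entries already determine the frieze (the paper's choice of $n+3$ matches the $c$-polygonal sequences introduced later, so the weaker formulation is deliberate). What the paper's proof buys in exchange is brevity in context and the explicit periodic structure of the extension, which your direct recursion does not exhibit.
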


\begin{proof}
Indeed, knowing $n+3$ consecutive elements of the first row yields all other elements of that row, following theorem \ref{periodicity}. The remaining elements follow from lemma \ref{frisep}.
\end{proof}

\subsection{}
\theoremstyle{plain}\newtheorem*{stper}{Corollary}
\begin{stper}\label{periodantiperiod}
\begin{enumerate}[a)] \qquad
\item A $c-$frieze with $s=t$ is periodic, with period a divisor of $n+3$.
\item A $c-$frieze with $s=-t$ is antiperiodic, with period a divisor of $n+3$.
\end{enumerate}
\end{stper}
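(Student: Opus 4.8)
The plan is to feed the two hypotheses $s=t$ and $s=-t$ directly into theorem~\ref{periodicity}, using the identity $st=(-c)^{n+1}$ from lemma~\ref{st} to simplify the scalar prefactors appearing there. Both parts of that theorem compare a value $f(i,j)$ to the value $f(i+n+3,j+n+3)$ obtained by shifting diagonally by $n+3$, and this shift preserves the row index; so the only datum to track is the scalar by which the value is multiplied and how it depends on the parity of the row. Once that scalar is pinned down, the divisibility of the period by $n+3$ will come from the same greatest-common-divisor argument already used for the corollary on rows of even order: the periods of a row form a subgroup of $\mathbb{Z}$, and a subgroup containing $n+3$ is generated by a divisor of $n+3$.

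For part a), I would first observe that on even rows the prefactor is already $1$, since theorem~\ref{periodicity} a) gives $f(2i,2i+k-1)=f(2i+n+3,2i+k+n+2)$ (and its odd-index companion) with no scalar at all, independently of the hypothesis. On odd rows, theorem~\ref{periodicity} b) produces the prefactors $(-c)^{n+1}/t^2$ and $(-c)^{n+1}/s^2$; rewriting $(-c)^{n+1}=st$ turns these into $s/t$ and $t/s$, both of which equal $1$ exactly when $s=t$. Hence $f(i+n+3,j+n+3)=f(i,j)$ holds on \emph{every} row, so $n+3$ is a period of the whole frieze and the least period divides $n+3$ by the subgroup argument above.

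For part b), the identical computation with $s=-t$ turns $s/t$ and $t/s$ into $-1$, so that $f(i+n+3,j+n+3)=-f(i,j)$ on every odd row, while the even-row factor stays $1$. The antiperiodicity is therefore witnessed on the odd rows, and in particular on the first row (the $x_i$), which by corollary~\ref{consec} determines the whole frieze; the divisibility claim follows as in part a), noting that the antiperiods form a coset of the group of periods, so the least antiperiod divides any antiperiod, in particular $n+3$.

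The main obstacle is precisely this parity asymmetry in part b): because $st=(-c)^{n+1}$ forces the even-row prefactor to be $1$ \emph{unconditionally}, the even rows — e.g. the all-ones row $0$ — are genuinely fixed, not negated, under the shift by $n+3$. The delicate point is thus to state carefully in what sense the frieze is ``antiperiodic'': the sign change lives on the odd (generating) rows, and one should package this so that it is consistent with the observation that applying the shift twice, i.e. shifting by $2(n+3)$, restores every value and yields honest periodicity of period dividing $2n+6$. Everything outside of this bookkeeping is routine substitution into theorem~\ref{periodicity}.
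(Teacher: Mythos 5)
Your proposal is correct and takes essentially the same route as the paper, whose entire proof is a one-line appeal to the pseudo-periodicity theorem: you simply make explicit the simplification $(-c)^{n+1}/t^2=s/t$ and $(-c)^{n+1}/s^2=t/s$ via $st=(-c)^{n+1}$, and the gcd/subgroup argument for divisibility that the paper borrows from its earlier corollary on rows of even order. Your closing caveat is well taken---even rows (e.g.\ row $0$ of ones) are fixed, not negated, under the shift by $n+3$, so the antiperiodicity in part b) genuinely lives only on the odd rows---but this is an imprecision in the paper's statement rather than a gap in your argument.
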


\begin{proof}
a) and b) follow from the pseudo-periodicity theorem~\ref{periodicity}.
\end{proof}

\theoremstyle{remark}\newtheorem*{CCperiod}{Remark}
\begin{CCperiod}
Note that friezes in [CC73] are such that $s=t=1$ so they are periodic with a period which is a divisor of $n+3$.
\end{CCperiod}

\section{Sections}
Recall that in [CC73], it is shown that a frieze is entirely determined by its values in a "diagonal". Diagonals are a special case of sections (in the sense of [ASS06], p.302). We now show that a $c-$frieze is entirely determined by its values in any section, provided all are nonzero. This is a generalisation of [AD], p.2326, lemma 3.1).

\subsection{}
We start by recalling the definition of section.

\theoremstyle{definition}\newtheorem*{Section}{Definition}
\begin{Section} \label{Section}
A \textit{section} $\Sigma$ is a subset of $\mathbb{B}_n$ such that $|\Sigma|=n+4$ and such that if $(i_0,j_0) \in \Sigma$, then

\begin{enumerate}[a)]
\item either $(i_0, j_0-1)$ or $(i_0+1, j_0)$ is in $\Sigma$, and
\item either $(i_0-1, j_0)$ or $f(i_0, j_0+1)$ is in $\Sigma$
\end{enumerate}
(whenever these points are defined).

A section $\Sigma$ is \textit{oblique} (diagonal in the terminology of [CC73]) if all the elements of $\Sigma$ are of the form $(i_0,j)$ with $i_0$ fixed or $(i,j_0)$ with $j_0$ fixed.
\end{Section}

\theoremstyle{remark}\newtheorem{exSection}[exFrise]{Example}
\begin{exSection}

$$
\begin{matrix}
0\\
          & 1\\
&            & -3\\
          & -2\\
&         & 10\\
          & 0\\\\\\
\end{matrix}
$$
is the image of a non-oblique section by the $c-$frieze given in example~\ref{exFrise}. From now on, we refer to the image of a section by a $c-$frieze simply as a section of a $c-$frieze. Two oblique sections of the $c-$frieze of example~\ref{exFrise} are:
$$
\begin{matrix}
0\\
&1\\
&&2\\
&&&-2\\
&&&&10\\
&&&&&0
\end{matrix}
\qquad\text{ and  } \qquad
\begin{matrix}
&&&&&0\\
&&&&1\\
&&&\frac{4}{5}\\
&&\frac{16}{5}\\
&\frac{-32}{5}\\
0 & & & & & & & .\\
\end{matrix}
$$
\end{exSection}

\subsection{}
We now show how to find one of the original variables $x_i$ from three elements of an oblique section.

\theoremstyle{plain}\newtheorem*{CoefficientLinearisation}{Lemma}
\begin{CoefficientLinearisation}\label{coeff}
For all $i$, $j$, with $-1 \leq j-i \leq n$, if $f(i,j) \neq 0$, we have
\begin{enumerate}[a)]
\item $x_{j+1}=\dfrac{f(i,j+1)-cf(i,j-1)}{f(i,j)}$;
\item $x_{i-1}=\dfrac{f(i-1,j)-cf(i+1,j)}{f(i,j)}$.
\end{enumerate}
\end{CoefficientLinearisation}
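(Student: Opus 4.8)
The plan is to obtain both identities directly from the two three-term recurrences recorded in Lemma~\ref{frisep}, after a single index shift followed by an elementary rearrangement. Because the $c$-frieze is defined by $f(i,j)=P_{j-i+1}(x_i,\dots,x_j)$, its entries satisfy exactly the structural relations of the $c$-continuant polynomials, so no fresh computation is needed beyond isolating the desired indeterminate on one side of a known recurrence.

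For part a), I would apply Lemma~\ref{frisep} a) with $j$ replaced by $j+1$, which gives
$$f(i,j+1)=x_{j+1}\,f(i,j)+c\,f(i,j-1).$$
The hypothesis $-1\le j-i\le n$ ensures $0\le (j+1)-i\le n+1$, which is precisely the range in which this instance of the recurrence is valid, and it simultaneously guarantees that the entries $f(i,j+1)$, $f(i,j)$ and $f(i,j-1)$ all lie in $\mathbb{B}_n$. Solving for $x_{j+1}$ and dividing by $f(i,j)$, which is nonzero by assumption, yields the claimed formula.

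For part b) the argument is symmetric, using the dual recurrence: I would apply Lemma~\ref{frisep} b) with $i$ replaced by $i-1$, obtaining
$$f(i-1,j)=x_{i-1}\,f(i,j)+c\,f(i+1,j).$$
Here the condition $-1\le j-i\le n$ translates to $0\le j-(i-1)\le n+1$, which is exactly the admissible range for this recurrence and also forces $f(i-1,j)\in\mathbb{B}_n$. Isolating $x_{i-1}$ and dividing by the nonzero value $f(i,j)$ gives the stated identity.

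There is essentially no obstacle of substance: the content of the lemma is simply the observation that the recurrences of Lemma~\ref{frisep} can be read backwards to recover a single original variable from three consecutive frieze entries. The only point demanding care is the bookkeeping with the index ranges, and I would stress that the hypothesis $-1\le j-i\le n$ has been chosen precisely so that the shifted recurrences apply and every referenced point belongs to $\mathbb{B}_n$.
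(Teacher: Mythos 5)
Your proposal is correct and follows essentially the same route as the paper: both derive the formulas by substituting the shifted recurrence of Lemma~\ref{frisep} (part a) for the first identity, part b) for the second) and dividing by the nonzero entry $f(i,j)$. Your additional bookkeeping verifying that the index shift stays within the admissible range $0 \leq j-i+1 \leq n+1$ is a harmless elaboration of what the paper leaves implicit.
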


\begin{proof}
We only prove a), because b) is similar. Because of lemma~\ref{frisep} a),
\begin{align*}
\frac{f(i, j+1) - cf(i, j-1)}{f(i, j)} &= \frac{x_{j+1}f(i, j) + cf(i,j-1) - cf(i, j-1)}{f(i, j)}\\
  &=x_{j+1}.\\
\end{align*}
\end{proof}

\subsection{}
\theoremstyle{plain}\newtheorem*{SectionFrise}{Proposition}
\begin{SectionFrise}
A $c$-frieze is entirely determined by a section of nonzero values.
\end{SectionFrise}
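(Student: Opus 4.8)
The plan is to show that knowing the $n+4$ values of $f$ on a section $\Sigma$ of nonzero values allows us to reconstruct all the original indeterminates $x_i$, since by Definition~\ref{frised} the entire $c$-frieze is determined once every $x_i$ is known. The strategy is local propagation: I will use the three-term relations of Lemma~\ref{coeff} to recover one new variable $x_{j+1}$ or $x_{i-1}$ from three suitably placed section values, then argue that repeating this move sweeps out every $x_i$ over $i \in \mathbb{Z}$.

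First I would unpack the defining conditions a) and b) of a section. Condition a) says that from any point $(i_0,j_0) \in \Sigma$, either its left neighbour $(i_0,j_0-1)$ or its lower-left neighbour $(i_0+1,j_0)$ also lies in $\Sigma$; condition b) says either $(i_0-1,j_0)$ or $(i_0,j_0+1)$ lies in $\Sigma$. The key observation is that these conditions guarantee $\Sigma$ is a connected "staircase" that meets every row from $-1$ to $n+2$ exactly once (the cardinality $|\Sigma|=n+4$ forces exactly one point per row). Consequently, for a generic interior point $(i,j) \in \Sigma$ together with two adjacent section points, I can always arrange one of the two configurations appearing in Lemma~\ref{coeff}: either I know $f(i,j-1), f(i,j), f(i,j+1)$ (a vertical triple in the same oblique line) and solve for $x_{j+1}$ via part a), or I know $f(i-1,j), f(i+1,j), f(i,j)$ and solve for $x_{i-1}$ via part b). Since every hypothesised value $f(i,j)$ in the denominator is nonzero, each such extraction is well defined.

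The main step is then an induction along the section that produces every $x_i$. I would begin at the point of $\Sigma$ in row $1$, namely some $(i_0,i_0)$ with $f(i_0,i_0)=x_{i_0}$, which directly gives one original variable. Climbing the staircase one step at a time using a) and b) of the definition of section, each move either increases the upper index $j$ (letting me solve for a new $x_{j+1}$ by Lemma~\ref{coeff} a)) or decreases the lower index $i$ (letting me solve for a new $x_{i-1}$ by Lemma~\ref{coeff} b)). Because the section spans all $n+4$ rows and the frieze is periodic or pseudo-periodic, Theorem~\ref{periodicity} together with Corollary~\ref{consec} lets me close the argument: the finitely many variables recovered within one period determine the rest, and knowing all $x_i$ reconstructs $f$ entirely.

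The delicate point, and the step I expect to require the most care, is verifying that the staircase conditions genuinely supply, at each stage, \emph{three} section values in exactly one of the two patterns Lemma~\ref{coeff} demands, rather than leaving a gap. In principle a section could turn a corner in a way that supplies only two of the three needed neighbours at some step. I would handle this by showing that whenever a direct application of Lemma~\ref{coeff} is blocked, one of the frieze relations in Lemma~\ref{frisep} (a) or b)) first fills in the missing value $f(i,j\pm 1)$ or $f(i\pm 1,j)$ from already-known quantities, after which the coefficient extraction proceeds. This reduces the whole argument to checking that the corner cases of the staircase are covered, which is a finite case analysis on the two choices in each of conditions a) and b).
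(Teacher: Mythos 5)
Your overall skeleton coincides with the paper's: start from the row-$1$ point of the section to get one indeterminate, recover a new $x_{j+1}$ or $x_{i-1}$ at each staircase step via Lemma~\ref{coeff}, and close with theorem~\ref{periodicity} and corollary~\ref{consec}. But the step you yourself flag as delicate is a genuine gap, and your proposed patch does not work. Lemma~\ref{coeff}~a) needs the three values $f(i,j-1)$, $f(i,j)$, $f(i,j+1)$ with the \emph{same} first index, and a section that turns a corner (say it contains $(i,j)$ and $(i+1,j)$ rather than $(i,j)$ and $(i,j+1)$) supplies only two of them; a section that alternates direction at every step contains no three such aligned points at all, so this is not an exceptional configuration but the typical one. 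To fill in the missing value you invoke Lemma~\ref{frisep}~a) or b); these, however, are the recurrences $f(i,j)=x_jf(i,j-1)+cf(i,j-2)$ and $f(i,j)=x_if(i+1,j)+cf(i+2,j)$, which presuppose knowledge of an indeterminate $x_j$ or $x_i$ that at such a corner is precisely the one not yet recovered (or else values lying still deeper in the unknown region). The argument is therefore circular at exactly the point where it needs to be right.

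The missing idea, which is the heart of the paper's proof, is the preliminary construction of the two translated sections $\Sigma^-$ and $\Sigma^+$ obtained by shifting every point of $\Sigma$ one step to the left and to the right. Rows $0$ and $n+1$ are fully known (the section meets row $n+1$ in one of $s,t$, and the other follows from $st=(-c)^{n+1}$ of lemma~\ref{st}), so starting from the top one computes the values on $\Sigma^{\pm}$ recursively by the modular rule, Lemma~\ref{frisep}~d): in each mesh three of the four values are already known and the fourth is, for instance,
\[
f(i+1,j)=\frac{(-c)^{j-i}+f(i+1,j-1)\,f(i,j)}{f(i,j-1)}\,,
\]
the division being legitimate precisely because the section values are nonzero --- a second place where the nonvanishing hypothesis enters, beyond the denominators of Lemma~\ref{coeff} that you do mention. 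With $\Sigma^-$ and $\Sigma^+$ in hand, the third value needed by Lemma~\ref{coeff} at any corner always lies in $\Sigma\cup\Sigma^-\cup\Sigma^+$, and your staircase induction then runs exactly as in the paper. So the correct fix is not a finite case analysis with Lemma~\ref{frisep}~a)/b), but this recursive computation of the neighbouring sections via the modular rule; without it, your proof does not go through for any section that is not oblique.
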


\begin{proof}
Let $\Sigma$ be a section of nonzero values. Fix $\Sigma=\{\sigma_{-1},\sigma_0,\dots,\sigma_{n+2}\}$ such that, for all $k \in \{-1,0,\dots,n+2\}$, $\sigma_k$ is in row $k$. Fix also $\Sigma^-=\{\sigma^-_{-1},\sigma^-_0,\dots,\sigma^-_{n+2}\}$ and $\Sigma^+=\{\sigma^+_{-1},\sigma^+_0,\dots,\sigma^+_{n+2}\}$ such that if $\Sigma_k=(i^*,i^*+k-1)$, then $\Sigma_k^-=(i^*-1,i^*+k-2)$ and $\Sigma_k^+=(i^*+1,i^*+k)$. Since all elements in row $0$ and in row $n+1$ (because of lemma~\ref{st}) are known, the image of $\Sigma^-$ and $\Sigma^+$ are recursively deduced from the modular rule. Finally, let $x_i=f(\sigma_1)$.

We want to show by induction that for all $m$ such that $0 \leq m \leq n+1$, there exist positive integers $k_m$ and $l_m$ with:
\begin{itemize}
\item $k_m+l_m=m$,
\item $k_{m+1}=k_m$ or $l_{m+1}=l_m$,
\item $(i-k_m,i+l_m) \in \Sigma$ and
\item both $x_{i-k_m-1}$ and $x_{i+l_m+1}$ are known.
\end{itemize}

For $k+l=0$, there exist $k=l=0$ such that $f(i,i)=x_i \in f(\Sigma)$. Furthermore, since $x_{i-1}=f(i-1,i-1)$ and $x_{i+1}=f(i+1,i+1)$ are respectively in the image of $\Sigma^-$ and $\Sigma^+$, they are both known.

Suppose that there exists $M \in \{0,1,\dots,n\}$ such that for all $m \in \{0,1,\dots,M\}$, there exist $k_m,l_m \in \mathbb{N}$ such that:
\begin{itemize}
\item $k_m+l_m=m$,
\item $k_{m+1}=k_m$ or $l_{m+1}=l_m$ (except for $m=M$),
\item $(i-k_m,i+l_m) \in \Sigma$ and 
\item both $x_{i-k_m-1}$ and $x_{i+l_m+1}$ are known.
\end{itemize} Then, following definition \ref{Section}, either $(i-k_M-1, i+l_M) \in \Sigma$ or $f(i-k_M, i+l_M+1) \in \Sigma$. If $(i-k_M-1, i+l_M) \in \Sigma$, then $f(i-k_M-2, i+l_M)$ is known, since it is in $\Sigma^-$ or $\Sigma$. Because of proposition \ref{coeff} b),

\[\frac{f(i-k_M-2, i+l_M)-c f(i-k_M, i+l_M)}{f(i-k_M-1, i+l_M)} = x_{i-k_M-2}~.\]

Therefore, $k_{M+1}=k_M+1$ and $l_{M+1}=l_M$ are such that $(i-k_M-1,i+l_M) \in \Sigma$ and both $x_{i-k_M-2}$ and $x_{i+l_M+1}$ are known. Similarly, if $(i-k_M, i+l_M+1) \in \Sigma$, then $x_{i+l_M+2}$ is known, using proposition~\ref{coeff} a). Then, $k_{M+1}=k_M$ and $l_{M+1}=l_M+1$ are such that $(i-k_M,i+l_M+1) \in \Sigma$ and both $x_{i-k_M-1}$ and $x_{i+l_M+2}$ are known.

Thus, using this reasoning, there exists $n+4$ consecutive elements in the first row that are known. Hence, because of theorem~\ref{periodicity} and lemma~\ref{frisep} c), all the $c-$frieze is known.
\end{proof}

%%%%%%%%%%%%%%%%%%%%%%%%%%%%%%%%%%%%%%%%%%%%%%%%%%%%%%%%%
\section{Friezes of positive integers}
Throughout this section, we fix $K=\mathbb{Q}$. Having in mind the Laurent phenomenon (see [FZ02]) and the positivity conjecture (see [FZ03]) in cluster algebras, we wish to find a sufficient condition for having a $c-$frieze of integers and even a $c-$frieze of positive integers. Recall that in [CC73], for having a $c-$frieze of positive integers, it is sufficient to have an oblique section $f(i_0,j)$ consisting of strictly positive elements, with $$f(i_0, j)|f(i_0, j+1)+ f(i_0, j-1) \text{ for all } i_0,j, \text{ such that } 0 \leq j- i_0 \leq n+1.$$ This condition is not sufficient for $c-$friezes.

%Because of lemma~\ref{coeff}, it is natural to replace this condition, for monotonic friezes, to an oblique section $f(i_0,j)$ of all strictly positive elements, with 
%$$f(i_0, j)|f(i_0, j+1)-c\cdot f(i_0, j-1) \text{ for all } i_0,j, \text{ such that } 0 \leq j- i_0 \leq n+1$$ However, this condition is not sufficient for the frieze to consist only of strictly positive elements.

\subsection{}
Following the remark of lemma~\ref{Frisess}, if $c>0$, it is not possible to have a $c-$frieze that consists only of positive integers. However, if $c<0$, it is possible.

\theoremstyle{plain}\newtheorem*{SectionFrisePositive}{Proposition}
\begin{SectionFrisePositive} \label{positif}
Let $f$ be a $c-$frieze with $c < 0$. If a section of $f$ consists only of positive elements, then the entire $c-$frieze consists only of positive elements.
\end{SectionFrisePositive}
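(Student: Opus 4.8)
The plan is to exploit the single fact that, for $c<0$, every power $(-c)^{j-i}$ is strictly positive, so that the modular rule of lemma~\ref{frisep}~d) can be read as
\[ f(i,j-1)\,f(i+1,j) = f(i+1,j-1)\,f(i,j) + (-c)^{j-i}, \]
i.e. the product of the two middle-row entries of a mesh equals the product of its top and bottom entries plus a \emph{positive} constant. Since rows $-1$ and $n+2$ are identically zero by construction, it suffices to prove positivity of every entry in rows $0,\dots,n+1$. Two of these rows are immediate: row $0$ is the constant $1$, and, as the given section meets row $n+1$ in a positive value while $st=(-c)^{n+1}>0$ by lemma~\ref{st}, both $s$ and $t$ are positive. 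So the whole content of the problem is to propagate positivity into the free rows $1,\dots,n$.

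The key mechanism is that a single \emph{corner flip} of a section preserves positivity. If a section contains, in three consecutive rows, the top $f(i+1,j-1)$, a middle entry $f(i,j-1)$ and the bottom $f(i,j)$ of one mesh, then replacing the middle entry by the opposite one $f(i+1,j)$ of that mesh yields another section, and the displayed identity gives
\[ f(i+1,j) = \frac{(-c)^{j-i} + f(i+1,j-1)\,f(i,j)}{f(i,j-1)}. \]
Whenever the retained top and bottom lie in rows $0,\dots,n+1$ (hence are positive) and the flipped middle entry $f(i,j-1)$ is positive, the right-hand side is a quotient of positive quantities, so $f(i+1,j)>0$ and the new section is positive as well. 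Thus positivity is preserved along any chain of corner flips carried out inside the free rows.

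I would then argue in two steps. First, by corner flips I straighten the given positive section into an \emph{oblique} section (in the sense of definition~\ref{Section}); by the flip argument it is still positive. Second, starting from this oblique (diagonal) section I sweep the entire frieze one diagonal at a time: knowing the positive diagonal through $i_0$, the next diagonal is filled in by
\[ f(i_0+1,j) = \frac{(-c)^{j-i_0} + f(i_0+1,j-1)\,f(i_0,j)}{f(i_0,j-1)}, \]
initialised from the known entries $f(i_0+1,i_0-1)=0$ and $f(i_0+1,i_0)=1$. Running $j$ through rows $1,\dots,n+1$, every denominator $f(i_0,j-1)$ sits in rows $0,\dots,n+1$ and is positive, while the numerator is $(-c)^{j-i_0}>0$ plus a product of nonnegative terms; hence each new entry is positive. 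Sweeping in both horizontal directions covers all of $\mathbb{B}_n$, which finishes the proof.

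The routine ingredient throughout is the ``quotient of positives'' computation; the real obstacle is the combinatorial claim that corner flips connect an arbitrary section to an oblique one \emph{while remaining in the free rows}. I would settle this by induction on the number of corners of the section, viewed as a monotone lattice path from row $-1$ to row $n+2$, flipping an innermost corner at each step and checking that its top and bottom always lie in rows $0,\dots,n+1$ so that the flip argument applies; some care is also needed to confirm that the diagonal sweep reaches every column of the bi-infinite strip, for which one uses that each diagonal is produced from its immediate neighbour.
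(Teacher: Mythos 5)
Your proposal is correct and rests on exactly the mechanism of the paper's own proof: completing one mesh at a time via the modular rule of lemma~\ref{frisep}~d), where $c<0$ makes $(-c)^{j-i}>0$, so each newly computed entry is a quotient of positive (or nonnegative-plus-positive) quantities. The paper organizes these same corner flips more economically---it orients the given section left-to-right, repeatedly flips sources to knit rightward and sinks to knit leftward, directly covering the frieze---so your intermediate straightening-to-an-oblique-section phase is unnecessary; note only that when a flip's retained top or bottom lands in row $-1$ or $n+2$ it is zero rather than positive, a case your ``nonnegative terms in the numerator'' remark already absorbs.
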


\begin{proof}
Let $\Sigma$ be that section. Since rows $-1$ and $0$ are known, consider elements in row 1. Draw between any 2 neighbouring points of $\Sigma$ an arrow, always oriented from left to right. Then, we obtain a quiver with underlying graph $\mathbb{A}$. In particular, it has a source and a sink. If a source $f(i,j)$ is given, then $f(i+1,j+1)$ is immediately deduced from the values in $\Sigma$ and the modular rule. Furthermore, since $c$ is negative and elements in $\Sigma$ are positive, using the modular rule, we have $f(i+1,j+1)=\frac{(-c)^{j-i+1}+f(i+1,j)f(i,j+1)}{f(i,j)} > 0$. Repeating this reasoning with the new section $\Sigma ^\prime = (\Sigma \diagdown \{f(i,j)\}) \cup \{f(i+1,j+1)\}$, we get another source and, inductively, we obtain all elements on the right of $\Sigma$. They are all positive. Applying the same reasoning to sinks, we obtain all elements on the left of $\Sigma$ and they are also positive.
\end{proof}

\subsection{}
Since $c-$friezes with $|s|=|t|$ are periodic, regardless of their order, we will only be considering these $c-$friezes for the remaining of this section. We call such $c-$friezes \textit{monotonic $c-$friezes}, or briefly \textit{monotonic friezes}.

To ensure that $|s|=|t|$, it is necessary that $(-c)^{\frac{n+1}{2}} \in \mathbb{Q}$ (which implies that $|s|=|(-c)^{\frac{n+1}{2}}|$). This is always the case if $n$ is odd. It is easily seen that it is necessary that $(-c)^{\frac{n+1}{2}} \in \mathbb{Z}$ to have a $c-$frieze of integers. This is always the case if $n$ is odd and $c \in \mathbb{Z}$.

\theoremstyle{plain}\newtheorem*{integers}{Frieze of integers theorem}
\begin{integers} \label{entier}
Let $f$ be a monotonic frieze with $s \in \mathbb{Z}$ (that is if and only if $t \in \mathbb{Z}$). The three following conditions are equivalent:
\begin{enumerate}[a)]
\item The frieze consists only of integers.
\item There exists an oblique section such that
  \begin{enumerate}[i)]
          \item for all $-1 \leq j- i_0 \leq n$, we have $f(i_0, j)|f(i_0, j+1)-c f(i_0, j-1)$ and
    \item
      $ \dfrac{c}{|s|} \begin{vmatrix}
      \mu_0 & -c & 0 & \ldots & 0 & 0\\
      1 & \mu_1 & -c & \ldots & 0 & 0\\
      0 & 1 & \mu_2 & \ldots & 0 & 0\\
      \vdots & \vdots & \vdots & \ddots & \vdots & \vdots\\
      0 & 0 & 0 & \ldots & \mu_{n-2} & -c \\
      0 & 0 & 0 & \ldots & 1 & \mu_{n-1} 
      \end{vmatrix}~\in~\mathbb{Z},$ $ \\ \\ $
      where $\mu_l = \dfrac{f(i_0,i_0+l)-c f(i_0,i_0+l-2)}{f(i_0,i_0+l-1)}$.
  \end{enumerate}
\item There exists an oblique section such that
  \begin{enumerate}[i)]
          \item for all $-1 \leq i- j_0 \leq n$, we have $f(i, j_0)|f(i-1, j_0)-c f(i+1,j_0)$ and
    \item
      $ \dfrac{c}{|s|} \begin{vmatrix}
      \nu_0 & -c & 0 & \ldots & 0 & 0\\
      1 & \nu_1 & -c & \ldots & 0 & 0\\
      0 & 1 & \nu_2 & \ldots & 0 & 0\\
       \vdots &  \vdots &  \vdots &  \ddots &  \vdots & \vdots\\
      0 & 0 & 0 & \ldots & \nu_{n-2} & -c \\
      0 & 0 & 0 & \ldots & 1 & \nu_{n-1} 
      \end{vmatrix}~\in~\mathbb{Z},$ $ \\ \\ $
      where $\nu_l = \dfrac{f(j_0-l,j_0)-c f(j_0-l+2,j_0)}{f(j_0-l+1,j_0)}$.
  \end{enumerate}
\end{enumerate}

\end{integers}
\begin{proof}
Following lemma~\ref{coeff}, a) trivially implies b) and c). Assume that the conditions of b) are satisfied. Then, $f(i_0,i_0+k-1) \neq 0$ for all $k$ such that $0 \leq k \leq n+1$. \\
By lemma~\ref{coeff} a), $$x_{i_0+k}=\frac{f(i_0,i_0+k)- c f(i_0,i_0+k-2)}{f(i_0,i_0+k-1)}.$$
Therefore, by hypothesis, $$x_{i_0+k} \in \mathbb{Z} \text{ for all } k \text{ such that }0\leq k \leq n+1.$$
Again by lemma~\ref{coeff} a), 
\[x_{i_0+n+2}=\frac{f(i_0,i_0+n+2)-c f(i_0,i_0+n)}{f(i_0,i_0+n+1)}.\]
Since $f(i_0,i_0+n+1)=\pm |s|$ and $f(i_0,i_0+n+2)=0$, we have
\[x_{i_0+n+2}=\pm\frac{c f(i_0,i_0+n)}{|s|}.\]
By lemma \ref{frisep} c) and lemma \ref{coeff} a), we have
\[x_{i_0+n+2}=\pm
\frac{c}{|s|} \begin{vmatrix}
      \mu_0 & -c & 0 & \ldots & 0 & 0\\
      1 & \mu_1 & -c & \ldots & 0 & 0\\
      0 & 1 & \mu_2 & \ldots & 0 & 0\\
      \vdots & \vdots & \vdots & \ddots & \vdots & \vdots\\
      0 & 0 & 0 & \ldots & \mu_{n-2} & -c \\
      0 & 0 & 0 & \ldots & 1 & \mu_{n-1} 
      \end{vmatrix}\]

which is an integer, by hypothesis.\\
Since $|s|=|t|$, by corollary~\ref{periodantiperiod}, $x_{i_0+k}\in \mathbb{Z} \text{ for all } k.$

Finally, following lemma \ref{frisep}, all the frieze consists of integers.

One can show similarly that c) implies a).
\end{proof}

\theoremstyle{remark} \newtheorem*{equivalent}{Remarks}
\begin{equivalent} \ 
\begin{enumerate}[a)]
\item Note that the second condition of b) is equivalent to having $\frac{c f(i_0+1,i_0+n)}{|s|}=\pm f(i_0+n+2,i_0+n+2) \in \mathbb{Z}$ and the second condition of c) is equivalent to having $\frac{c f(j_0-n,j_0-1)}{|s|}=\pm f(j_0+1,j_0+1) \in \mathbb{Z}$, using lemma~\ref{trans:box} and theorem~\ref{periodicity}. Therefore, a monotonic frieze consists only of integers if and only if its first row only consists of integers.
\item Note that friezes in [CC73] are such that the additional condition of b) (or of c), respectively) is always satisfied, since $s=1$.
\end{enumerate}
\end{equivalent}

\theoremstyle{remark}\newtheorem{Exampleintegers}[exFrise]{Example}
\begin{Exampleintegers}
As an example, consider the following $(-4)-$frieze of order $n=2$ with $|s|=|t|=8$:
$$ $$
\[\ldots
\begin{array}{cccccccccccccccccccc}
& 0 && 0 && 0 && 0 && 0 && 0 && 0 && 0 && 0 &\\
 && 1 && 1 && 1 && 1 && 1 && 1 && 1 && 1 &\\
& 4 && 3 && 3 && 4 && \frac{5}{2} && 4 && 3 && 3 && 4 &\\
&& 8 && 5 && 8 && 6 && 6 && 8 && 5 && 8 &\\
& 8 && 8 && 8 && 8 && 8 && 8 && 8 && 8 && 8 &\\
&& 0 && 0 && 0 && 0 && 0 && 0 && 0 && 0 &
\end{array} \ldots\]
$ \\
\\ $
While the oblique section
\[
\begin{matrix}
0\\
&1\\
&&4\\
&&&8\\
&&&&8\\
&&&&&0
\end{matrix}
\]
satisfies the condition $$f(i_0, j)|f(i_0, j+1)-c f(i_0, j-1) \text{ for all } i_0,j \text{, such that }0 \leq j- i_0 \leq n+1,$$ the corresponding monotonic frieze does not consist only of integers.
\end{Exampleintegers}

\subsection{}

\theoremstyle{plain}\newtheorem*{naturel}{Corollary}
\begin{naturel}
Let $c<0$. If a monotonic $c-$frieze has an oblique section of positive integers satisfying the conditions of the friezes of integers theorem, then the entire frieze consists of positive integers.
\end{naturel}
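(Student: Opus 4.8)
The plan is to combine the two main results of this section, namely the positivity Proposition~\ref{positif} and the Frieze of integers theorem~\ref{entier}. Both apply to the very same oblique section, and together they force every entry of the frieze to be simultaneously positive and integral, hence a positive integer.

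First I would observe that the given oblique section is in particular a section in the sense of Definition~\ref{Section}, and that its entries, being positive integers, are in particular positive. Since $c<0$, Proposition~\ref{positif} then applies without modification and yields that the entire $c$-frieze consists of positive elements of $\mathbb{Q}$.

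Next I would verify that the hypotheses of Theorem~\ref{entier} are met. The oblique section contains exactly one entry in each row $-1,0,\dots,n+2$; by Lemma~\ref{st}, its entry in row $n+1$ equals $s$ or $t$, and this entry is a positive integer by assumption. As the frieze is monotonic we have $|s|=|t|$, so $|s|$ is a positive integer and thus $s\in\mathbb{Z}$, which is precisely the standing hypothesis of Theorem~\ref{entier}. Since the section is assumed to satisfy conditions (i) and (ii) of the theorem (say those of part b)), the theorem gives that the whole frieze consists of integers.

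Finally I would combine the two conclusions: each element $f(i,j)$ is both positive, by the first step, and an integer, by the second, hence a positive integer, as claimed. There is no genuine obstacle here, as the entire content is already packaged in Proposition~\ref{positif} and Theorem~\ref{entier}; the only point deserving a moment's care is checking that the positivity and integrality of the section's row-$(n+1)$ value forces $s\in\mathbb{Z}$, so that Theorem~\ref{entier} is indeed applicable.
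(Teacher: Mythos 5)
Your proposal is correct and follows essentially the same route as the paper, which proves the corollary by directly combining the positivity proposition (for $c<0$ with a positive section) with the frieze of integers theorem. Your additional verification that the section's row-$(n+1)$ entry forces $s\in\mathbb{Z}$ is a careful touch the paper leaves implicit, but it does not change the argument.
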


\begin{proof}
Directly follows from proposition~\ref{positif} and friezes of integers theorem~\ref{entier}.
\end{proof}

\subsection{}
Another type of monotonic friezes is such that it is sufficient (and necessary) to have only the first condition of the friezes of integers theorem~\ref{entier} of b) (or of c), respectively).

\theoremstyle{definition}\newtheorem*{AltFr}{Definition}
\begin{AltFr}
A monotonic frieze over $\mathbb{Q}$ is called \textit{alternating} if $c=1$.
\end{AltFr}

Note that since $c=1$, we have $|s|=|t|=1$. 

The $(-c)-$modular rule, applied to these friezes, can be written as the determinant:

$$\left|
\begin{array}{cc}
f(i,j-1) & f(i,j)\\
f(i+1,j-1) & f(i+1,j)\\
\end{array}
\right| = (-1)^{j-i}.$$

Thus, since the determinant is $\pm 1$, the corresponding matrices are invertible and belong to the general linear group $GL_2(\mathbb{Q})$.

One can show that the second condition of the friezes of integers theorem~\ref{entier} of b) (or of c), respectively) is always satisfied, since $|s|=1$.

\theoremstyle{remark}\newtheorem{ExampleAlt}[exFrise]{Example}
\begin{ExampleAlt}
As an example, consider the oblique section of an alternating frieze of order $n=4$\[
\begin{matrix}
0\\
&1\\
&&-4\\
&&&-3\\
&&&&5\\
&&&&&2\\
&&&&&&-1\\
&&&&&&&0 \qquad
\end{matrix}
\]
that satisfies the condition $$f(i_0, j)|f(i_0, j+1)- f(i_0, j-1) \text{ for all }j \text{, such that }0 \leq j- i_0 \leq n+1.$$
Its corresponding frieze consists only of integers:
$$ $$
\resizebox{\linewidth}{!}{
$ \begin{array}{ccccccccccccccccccc}
&& 0 && 0 && 0 && 0 && 0 && 0 && 0 && 0\\
&& & 1 && 1 && 1 && 1 && 1 && 1 && 1\\
&& -4 && 1 && -3 && 1 && -3 && 1 && -4 && 1 \\
& && -3 && -2 && -2 && -2 && -2 && -3 && -3\\
& \ldots & -2 && 5 && -1 && 3 && -1 && 5 && -2 && 5 & \ldots\\
&& & 3 && 2 && 1 && 1 && 2 && 3 && 3 \\
&& -1 && 1 && -1 && 1 && -1 && 1 && -1 && 1 \\
&& & 0 && 0 && 0 && 0 && 0 && 0 && 0
\end{array} $}
$$ $$
\end{ExampleAlt}

We will see in the next section that alternating friezes are related to the friezes of [CC73].

%%%%%%%%%%%%%%%%%%%%%%%%%%%%%%%%%%%%%%%%%%%%%%%%
\section{Transformation of $c-$friezes}

Until now, we studied friezes having both $c$ and $n$ fixed. The objective of this section is to discuss ways of generating friezes from others, using transformations. To do so, either the parameter $c$ or the order $n$ will change.

\subsection{}

\theoremstyle{definition}\newtheorem*{o-mega}{Definition}
\begin{o-mega}
We call \textit{$c-$polygonal sequence of order $n$} a family of $n+3$ consecutive elements of the first row of a $c-$frieze of order $n$.
We denote it by $$\psi_c^n=(x_1, x_2, x_3, ..., x_{n+3}).$$
We denote the set of all $c-$polygonal sequences of order $n$ by $\Psi_c^n$.
\end{o-mega} 

\theoremstyle{remark}\newtheorem*{Consecutive}{Remark}
\begin{Consecutive}
Since a $c-$frieze is entirely determined by $n+3$ consecutive elements in its first row, following theorem~\ref{periodicity}, there exists one and only one $c-$frieze associated with a $c-$polygonal sequence of order $n$. However, there are generally more than one (possibly a countable infinity) $c-$polygonal sequences of order $n$ corresponding to a $c-$frieze of order $n$.
\end{Consecutive}

\subsection{}
\theoremstyle{plain}\newtheorem*{c-c}{Lemma}
\begin{c-c}\label{c-c}
There is a bijection between the set of $c-$polygonal sequences of order $n$ and the set of $(-c)-$polygonal sequences of order $n$.
\end{c-c}

\begin{proof}
Consider $$
\begin{array}{rccc}
g : & \Psi_c^n & \rightarrow & \Psi_{-c}^{n}\\
& (x_1, x_2, x_3, \dots, x_{n+3}) & \mapsto & (-x_1, x_2, -x_3, \dots, (-1)^{n+3}x_{n+3})
\end{array}
$$
that is switching the sign of one out of two indeterminates, starting with the first one.
By lemma~\ref{Sign}, the image of this transformation is a $(-c)-$polygonal sequence of order $n$. The function is involutive, so it is bijective.
\end{proof}

\subsection{}
\theoremstyle{plain}\newtheorem*{c-fr-c-fr}{Theorem}
\begin{c-fr-c-fr}\label{c-fr-c-fr}
For any $c-$frieze $f$ of order $n$, there exists a $(-c)-$frieze $f^{\prime}$ of order $n$ such that

$
f'(i,i+k-1) = 
\begin{cases} 
  &$ if $k\equiv 0$ mod $4$; $\\
f(i, i+k-1)& \ $if $ k\equiv 1$ mod $4$ and $i$ is even; $\\
& \ $if $ k\equiv 3$ mod $4$ and $i$ is odd;$ \\
\\
&$ if $k\equiv 1$ mod $4$ and $i$ is odd; $\\
-f(i,i+k-1) & \ $if $ k\equiv 2$ mod $4$; $\\
& \ $if $ k\equiv 3$ mod $4$ and $i$ is even.$
\end{cases}$ 
\end{c-fr-c-fr}

\begin{proof}
This directly follows from the proof of lemma~\ref{c-c} and from lemma~\ref{Sign}.
\end{proof}

\theoremstyle{remark}\newtheorem*{UpToSign}{Remarks}
\begin{UpToSign}
\begin{enumerate}[a)] \ 
\item Note that there are generally two $(-c)-$friezes of order $n$ generated from a $c-$frieze of order $n$, by using the method of theorem~\ref{c-fr-c-fr}.
\item Note that friezes $f$ and $f^{\prime}$ are the same, up to sign.
\item As a direct consequence of theorem~\ref{c-fr-c-fr}, it is possible to construct all alternating friezes of nonzero integers from friezes of [CC73] and vice-versa.
\item As a direct consequence of theorem~\ref{c-fr-c-fr}, for any triangulation of a convex polygon (in the sense of [CC73]), there exists an alternating frieze of integers.
\end{enumerate}
\end{UpToSign}

\subsection{}
Since there are generally more than one $c-$polygonal sequence of order $n$ associated with a given $c-$frieze, it is natural to consider two $c-$polygonal sequences of the same frieze as equivalent. We denote by $\sim$ the equivalence relation in $\Psi_c^n$ defined by $\psi_{c,1}^n \sim \psi_{c,2}^n$ if and only if $\psi_{c,1}^n$ and $\psi_{c,2}^n$ are associated to the same $c-$frieze. Then, we let $\widetilde{\Psi}_c^n=\Psi_c^n/\!\raisebox{-.65ex}{\ensuremath{\mathcal{\sim}}} \ \ $. It is immediate that there is a bijection between the set of $c-$friezes of order $n$ and $\widetilde{\Psi}_c^n$.

\theoremstyle{plain}\newtheorem*{bbc2th}{Proposition}
\begin{bbc2th}\label{cd2}
Let $d \in K^*$. There is a bijection between the set of $c-$friezes of order $n$ and the set of $cd^2-$friezes of order $n$. Furthermore, this bijection maps $c-$friezes of integers injectively to $cd^2$-friezes of integers.
\end{bbc2th}

\begin{proof}
Consider $$
\begin{array}{rccc}
h : & \widetilde{\Psi}_c^n & \rightarrow & \widetilde{\Psi}_{cd}^n\\
& (x_1, x_2, x_3, \dots, x_{n+2}, x_{n+3}) \qquad & \mapsto & (dx_1, dx_2, dx_3, \dots, dx_{n-2}, dx_{n+3}).
\end{array}
$$

By lemma~\ref{Frisess} f), the image of this transformation is a $cd^2-$polygonal sequence of order $n$. Its inverse function is:
$$
\begin{array}{rccc}
h^{-1} : & \widetilde{\Psi}_{cd}^n & \rightarrow & \widetilde{\Psi}_c^n\\
& (x_1, x_2, x_3, \dots, x_{n+2}, x_{n+3}) \qquad & \mapsto & (\frac{x_1}{d}, \frac{x_2}{d}, \frac{x_3}{d}, \dots, \frac{x_{n-2}}{d}, \frac{x_{n+3}}{d})
\end{array}
$$
so it is bijective.

The second part follows from proposition~\ref{Frisess} f).
\end{proof}

\subsection{}
Recall that in [CC73], it is shown how to generate all friezes of order $n+1$ from friezes of order $n$ and vice-versa. We extend this technique to monotonic friezes with $s=t$ and $c<0$. Such monotonic friezes are called \textit{repetitive}. We denote by $\Upsilon_c^n$ the set of all $c-$polygonal sequences of repetitive $c-$friezes of order $n$. Note that $\Upsilon_c^n$ is a proper subset of $\Psi_c^n$.

\theoremstyle{definition}\newtheorem*{c-induced}{Definition}
\begin{c-induced}
A repetitive frieze is called \textit{$c-$induced} if it has an element of value $(-c)^{1/2}$ in its first row. We denote by $\Phi_c^n$ the set of all $c-$polygonal sequences of $c-$induced friezes of order $n$. Since $\Phi_c^n \subset \Psi_c^n$, we let $\widetilde{\Phi}_c^n=\Phi_c^n/\!\raisebox{-.65ex}{\ensuremath{\mathcal{\ \sim}}} \ \ $.
\end{c-induced}

\theoremstyle{remark}\newtheorem*{fieldQ}{Remark}
\begin{fieldQ}
All friezes of positive integers in [CC73] are $(-1)-$induced. A proof is given in [CC73], p.180.
\end{fieldQ}

\subsection{}
\theoremstyle{plain}\newtheorem*{nn+1}{Lemma}
\begin{nn+1}\label{nn+1}
There exists a surjective transformation from the set of all $c-$polygonal sequences of repetitive friezes of order $n$ to the set of all equivalent $c-$polygonal sequences of $c-$induced friezes of order $n+1$.
\end{nn+1}

\begin{proof}
Consider
\[
\begin{array}{r r c l}
\Gamma : & \Upsilon_c^n & \rightarrow & \widetilde{\Phi}_c^{n+1}\\
& (x_1, x_2, x_3, \dots, x_{n+2}, x_{n+3}) & \mapsto & (x_1+(-c)^{1/2}, x_2, x_3, \dots,  \\
& & & \qquad \qquad x_{n+2}, x_{n+3}+(-c)^{1/2},(-c)^{1/2})
\end{array}
\]

We first show that the image of a $c-$polygonal sequence of a repetitive frieze of order $n$ by $\Gamma$ is a sequence of elements of the first row of a $c-$induced frieze of order $n+1$. It is trivial that there is an element of value $(-c)^{\frac{1}{2}}$ in the first row. Then, we must show that for all $i$,
\[P_{n+2}(x_i, x_{i+1}, \dots, x_{n+3}+(-c)^{1/2},(-c)^{1/2},x_1+(-c)^{1/2}, x_2, \dots, x_{i-3})=(-c)^{1/2}s\]
and 
\[P_{n+3}(x_i,x_{i+1}, \dots, x_{n+3}+(-c)^{1/2},(-c)^{1/2},x_1+(-c)^{1/2}, x_2, \dots, x_{i-2})=0~.\]
Since $(x_1, x_2, x_3, \dots, x_{n+3})$ is a sequence of elements of the first row of a repetitive $c$-frieze of order $n$, for all $i$ we have 
\[P_{n+1}(x_i, x_{i+1}, \dots, x_{i+n})=P_{n+1}(x_i, x_{i+1}, \dots, x_{i-3})=s \text{ and}\]
\[P_{n+2}(x_i, x_{i+1}, \dots, x_{i+n+1})=P_{n+2}(x_i, x_{i+1}, \dots, x_{i-2})=0~.\]

According to proposition~\ref{Frisess} c), we have
\begin{align*}
&P_{n+2}(x_{i},...,x_{n+3}+(-c)^{\frac{1}{2}},(-c)^{\frac{1}{2}},x_{1}+(-c)^{\frac{1}{2}},...,x_{i-3}) \\
 & \qquad =P_{n-i+3}(x_i,...,x_{n+2})P_{i-1}(x_{n+3}+(-c)^{\frac{1}{2}},(-c)^{\frac{1}{2}},x_{1}+(-c)^{\frac{1}{2}},...,x_{i-3})\\
 & \qquad \qquad +cP_{n-i+2}(x_i,...,x_{n+1})P_{i-2}((-c)^{\frac{1}{2}},x_{1}+(-c)^{\frac{1}{2}},...,x_{i-3})~.
\end{align*}
Then, using proposition~\ref{Frisess} b) twice, we have
\begin{align*}
&P_{n+2}(x_{i},...,x_{n+3}+(-c)^{\frac{1}{2}},(-c)^{\frac{1}{2}},x_{1}+(-c)^{\frac{1}{2}},...,x_{i-3})\\
& \qquad =P_{n-i+3}(x_i,...,x_{n+2}) [(x_{n+3}+(-c)^{\frac{1}{2}})P_{i-2}((-c)^{\frac{1}{2}},x_{1}+(-c)^{\frac{1}{2}},...,x_{i-3})\\
& \qquad \qquad +cP_{i-3} (x_{1}+(-c)^{\frac{1}{2}},...,x_{i-3})]\\
& \qquad \qquad +cP_{n-i+2}(x_i,...,x_{n+1})[(-c)^{\frac{1}{2}}P_{i-3}(x_{1}+ (-c)^{\frac{1}{2}},...,x_{i-3})\\
& \qquad \qquad +cP_{i-4}(x_{2},...,x_{i-3})] \\
& \qquad =P_{n-i+3}(x_i,...,x_{n+2})[(x_{n+3}+(-c)^{\frac{1}{2}})[(-c)^{\frac{1}{2}}P_{i-3}(x_{1}+(-c)^{\frac{1}{2}},...,x_{i-3}) \\
& \qquad \qquad +cP_{i-4}(x_{2},...,x_{i-3})]+cP_{i-3}(x_{1}+(-c)^{\frac{1}{2}},...,x_{i-3})]\\
& \qquad \qquad +cP_{n-i+2}(x_i,...,x_{n+1})[(-c)^{\frac{1}{2}}[(x_{1}+(-c)^{\frac{1}{2}})P_{i-4}(x_{2},...,x_{i-3})\\
& \qquad \qquad +cP_{i-5}(x_{3},...,x_{i-3})] +cP_{i-4} (x_{2},...,x_{i-3})]~, \\
\end{align*}
which, by developing, becomes
\begin{align*}
&P_{n+2}(x_{i},...,x_{n+3}+(-c)^{\frac{1}{2}},(-c)^{\frac{1}{2}},x_{1}+(-c)^{\frac{1}{2}},...,x_{i-3})\\
& \qquad=P_{n-i+3}(x_i,...,x_{n+2})[(-c)^{\frac{1}{2}}x_{n+3}P_{i-3}(x_{1}+(-c)^{\frac{1}{2}},...,x_{i-3})\\
& \qquad \qquad -(-c)^{\frac{3}{2}}P_{i-4}(x_{2},...,x_{i-3})+cx_{n+3}P_{i-4}(x_{2},...,x_{i-3})]\\
& \qquad \qquad + cP_{n-i+2}(x_i,...,x_{n+1})[(-c)^{\frac{1}{2}}x_{1}P_{i-4}(x_{2},...,x_{i-3})\\
& \qquad \qquad -(-c)^{\frac{3}{2}}P_{i-5} (x_{3},...,x_{i-3})]~.
\end{align*}
Using once again proposition~\ref{Frisess}b), we find
\begin{align*}
&P_{n+2}(x_{i},...,x_{n+3}+(-c)^{\frac{1}{2}},(-c)^{\frac{1}{2}},x_{1}+(-c)^{\frac{1}{2}},...,x_{i-3})\\
& \ \ =P_{n-i+3}(x_i,...,x_{n+2})[(-c)^{\frac{1}{2}}x_{n+3}[x_1+(-c)^{\frac{1}{2}}P_{i-4}(x_2,...,x_{i-3})\\
& \qquad +cP_{i-5}(x_3,...,x_{i-3})-(-c)^{\frac{3}{2}}P_{i-4}(x_{2},...,x_{i-3})+cx_{n+3}P_{i-4}(x_{2},...,x_{i-3})]\\
& \qquad + cP_{n-i+2}(x_i,...,x_{n+1})[(-c)^{\frac{1}{2}}x_{1}P_{i-4}(x_{2},...,x_{i-3})\\
& \qquad -(-c)^{\frac{3}{2}}P_{i-5} (x_{3},...,x_{i-3})]~,
\end{align*}
which can be rewritten as
\begin{align*}
&P_{n+2}(x_{i},...,x_{n+3}+(-c)^{\frac{1}{2}},(-c)^{\frac{1}{2}},x_{1}+(-c)^{\frac{1}{2}},...,x_{i-3})\\
& \qquad =P_{n-i+3}(x_i,...,x_{n+2})[[(-c)^{\frac{1}{2}}x_{1}x_{n+3}-(-c)^{\frac{3}{2}}]P_{i-4}(x_2,...,x_{i-3})\\
& \qquad \qquad +(-c)^{\frac{1}{2}}x_{n+3}cP_{i-5}(x_{3},...,x_{i-3})]\\
& \qquad \qquad + cP_{n-i+2}(x_{n+3},...,x_{i-3})[(-c)^{\frac{1}{2}}x_{1}P_{i-4}(x_{2},...,x_{i-3})\\
& \qquad \qquad -(-c)^{\frac{3}{2}}P_{i-5} (x_{3},...,x_{i-3})] \\
& \qquad = P_{n-i+3}(x_i,...,x_{n+2})[(-c)^{\frac{1}{2}}x_{n+3}]P_{i-4}(x_2,...,x_{i-3})x_1\\
& \qquad \qquad +cP_{i-5}(x_{3},...,x_{i-3})] - (-c)^{\frac{3}{2}}P_{i-4}(x_2,...x_{i-3})]\\
& \qquad \qquad + cP_{n-i+2}(x_{n+3},...,x_{i-3})[(-c)^{\frac{1}{2}}P_{i-4}(x_{2},...,x_{i-3})x_1\\
& \qquad \qquad +cP_{i-5} (x_{3},...,x_{i-3})]~.
\end{align*}
Using proposition~\ref{Frisess}b), that is
\begin{align*}
&P_{n+2}(x_{i},...,x_{n+3}+(-c)^{\frac{1}{2}},(-c)^{\frac{1}{2}},x_{1}+(-c)^{\frac{1}{2}},...,x_{i-3})\\
& \qquad =P_{n-i+3}(x_i,...,x_{n+2}) \\
& \qquad \qquad [(-c)^{\frac{1}{2}}x_{n+3}P_{i-3}(x_1,...,x_{i-3})-(-c)^{\frac{3}{2}}]P_{i-4}(x_2,...,x_{i-3})]\\
& \qquad \qquad +cP_{n-i+2}(x_{n+3},...,x_{i-3})[(-c)^{\frac{1}{2}}P_{i-3}(x_1,...,x_{i-3})]
\end{align*}
and using it again yields
\begin{align*}
&P_{n+2}(x_{i},...,x_{n+3}+(-c)^{\frac{1}{2}},(-c)^{\frac{1}{2}},x_{1}+(-c)^{\frac{1}{2}},...,x_{i-3})\\
& \qquad = P_{n-i+3}(x_i,...,x_{n+2})[(-c)^{\frac{1}{2}}P_{i-2}(x_{n+3},...,x_{i-3})]\\
& \qquad \qquad +cP_{n-i+2}(x_{n+3},...,x_{i-3})[(-c)^{\frac{1}{2}}P_{i-3} (x_{1},...,x_{i-3})]~.
\end{align*}
The equation can be rewritten as
\begin{align*}
&P_{n+2}(x_{i},...,x_{n+3}+(-c)^{\frac{1}{2}},(-c)^{\frac{1}{2}},x_{1}+(-c)^{\frac{1}{2}},...,x_{i-3})\\
& \qquad = (-c)^{\frac{1}{2}}[P_{n-i+3}(x_i,...,x_{n+2})P_{i-2}(x_{n+3},...,x_{i-3})\\
& \qquad \qquad +P_{n-i+2}(x_i,...,x_{n+1})P_{i-3} (x_{1},...,x_{i-3})]~,
\end{align*}
which, using once again proposition~\ref{Frisess}b), is
\begin{align*}
&P_{n+2}(x_{i},...,x_{n+3}+(-c)^{\frac{1}{2}},(-c)^{\frac{1}{2}},x_{1}+(-c)^{\frac{1}{2}},...,x_{i-3})\\
& \qquad =(-c)^{\frac{1}{2}}P_{n+1}(x_i,...,x_{i-3})~.
\end{align*}
Finally, replacing $P_{n+1}(x_i,...,x_{i-3})$ by $s$, we find
\[P_{n+2}(x_{i},...,x_{n+3}+(-c)^{\frac{1}{2}},(-c)^{\frac{1}{2}},x_{1}+(-c)^{\frac{1}{2}},...,x_{i-3})=(-c)^{\frac{1}{2}}s~.\]
Using the same arguments, one can show that 
\begin{align*}
&P_{n+3}(x_{i},...,x_{n+3}+(-c)^{\frac{1}{2}},(-c)^{\frac{1}{2}},x_{1}+(-c)^{\frac{1}{2}},...,x_{i-1})\\
& \qquad =(-c)^{\frac{1}{2}}P_{n+2}(x_{i},...,x_{i-1})\\
& \qquad =0~.\\
\end{align*}

This shows that the image of $\Gamma$ is a $c-$induced frieze of order $n+1$. Surjectivity follows from the fact that it is a sequence of equalities.
\end{proof}

We admit that the proof of this lemma is a lengthy calculation and so, a more conceptual argument, if possible, would be desirable.

\subsection{}
\theoremstyle{plain}\newtheorem*{Frnn+1}{Theorem}
\begin{Frnn+1}
Let $i \in \{1,2,3,\dots,n+3,n+4\}$ and fix $i_0 \in \{1,2,3,\dots,n+2\}$. For any repetitive $c-$frieze $f$ of order $n \geq 1$, there exists a $c-$induced frieze $f^{\prime}$ of order $n+1$, such that for all $1 \leq i \leq n+4$, we have \\
$
f'(i,i) = 
\begin{cases} 
f(i,i) & \ $if $i<i_0$; $\\
f(i,i) + (-c)^{\frac{1}{2}} & \ $if $i=i_0$; $\\
(-c)^{\frac{1}{2}} & \ $if $i=i_0+1$; $\\
f(i-1,i-1) + (-c)^{\frac{1}{2}} & \ $if $i=i_0+2$; $\\
f(i-1,i-1) & \ $if $i>i_0+2$. $
\end{cases}$

Furthermore, for any $c-$induced frieze $f^{\prime}$ of order $n \geq 2$, if we let $j_0 \in \{2,3,4,\dots,n+2\}$ be such that $f(j_0,j_0)=(-c)^{\frac{1}{2}}$, then there exists a repetitive $c-$frieze $f$ of order $n-1$ such that for all $1 \leq i \leq n+2$, we have \\
$
f(i,i) = 
\begin{cases} 
f'(i,i) & \ $if $i<j_0-1$; $\\
f'(i,i) - (-c)^{\frac{1}{2}} & \ $if $i=j_0-1$; $\\
f'(i+1,i+1) - (-c)^{\frac{1}{2}} & \ $if $i=j_0+1$; $\\
f'(i+1,i+1) & \ $if $i>j_0+1$. $
\end{cases}$
\end{Frnn+1}

\begin{proof}
This follows directly from the proof of lemma~\ref{nn+1}. The friezes are uniquely determined, because of corollary~\ref{consec}.
\end{proof}

\subsection{}
\theoremstyle{plain}\newtheorem*{monoint}{Corollary}
\begin{monoint}
Let $i \in \{1,2,3,\dots,n+3,n+4\}$ and fix $i_0 \in \{1,2,3,\dots,n+2\}$. If $(-c)$ is the square of an integer, then for any repetitive $c-$frieze $f$ of order $n \geq 1$, there exists a $c-$induced frieze of integers $f^{\prime}$ of order $n+1$, such that for all $1 \leq i \leq n+4$, we have \\
$
f'(i,i) = 
\begin{cases} 
f(i,i) & \ $if $i<i_0$; $\\
f(i,i) + (-c)^{\frac{1}{2}} & \ $if $i=i_0$; $\\
(-c)^{\frac{1}{2}} & \ $if $i=i_0+1$; $\\
f(i-1,i-1) + (-c)^{\frac{1}{2}} & \ $if $i=i_0+2$; $\\
f(i-1,i-1) & \ $if $i>i_0+2$. $
\end{cases}$

Furthermore, for any $c-$induced frieze of integers $f^{\prime}$ of order $n \geq 2$, if we let $j_0 \in \{2,3,4,\dots,n+2\}$ be such that $f(j_0,j_0)=(-c)^{\frac{1}{2}}$, then there exists a repetitive $c-$frieze of integers $f$ of order $n-1$ such that for all $1 \leq i \leq n+2$, we have \\
$
f(i,i) = 
\begin{cases} 
f'(i,i) & \ $if $i<j_0-1$; $\\
f'(i,i) - (-c)^{\frac{1}{2}} & \ $if $i=j_0-1$; $\\
f'(i+1,i+1) - (-c)^{\frac{1}{2}} & \ $if $i=j_0+1$; $\\
f'(i+1,i+1) & \ $if $i>j_0+1$. $
\end{cases}$
\end{monoint}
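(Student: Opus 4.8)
The plan is to deduce this corollary from the theorem of the preceding subsection, whose formulas already produce the frieze $f'$ (respectively $f$) and describe its first row explicitly; the only new content is the integrality, and for that the hypothesis that $(-c)$ is a perfect square, together with the integrality criterion recorded in the remark following the frieze of integers theorem~\ref{entier}, does all the work.

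First I would treat the forward direction. Let $f$ be a repetitive $c$-frieze of integers of order $n$. By the previous theorem there is a $c$-induced frieze $f'$ of order $n+1$ whose first row is given by the displayed piecewise formula. Since $(-c)$ is the square of an integer, $(-c)^{1/2}\in\mathbb{Z}$. The first row of $f'$ is obtained from the first row of $f$ (which consists of integers, as $f$ is a frieze of integers) by inserting the value $(-c)^{1/2}$ and by adding $(-c)^{1/2}$ to the two flanking entries; hence the first row of $f'$ again consists of integers.

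It then remains to promote integrality of the first row to integrality of the whole frieze. Because $f'$ is $c$-induced it is in particular repetitive, hence monotonic, and by lemma~\ref{st} its penultimate value satisfies $(s')^2=(-c)^{n+2}$, so that $s'=\pm(-c)^{(n+2)/2}=\pm\bigl((-c)^{1/2}\bigr)^{n+2}\in\mathbb{Z}$. Thus the frieze of integers theorem~\ref{entier} applies to $f'$, and its accompanying remark, which states that a monotonic frieze consists only of integers if and only if its first row does, lets me conclude that $f'$ is a frieze of integers.

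Finally, the converse direction is entirely symmetric: starting from a $c$-induced frieze of integers $f'$ of order $n$ and the index $j_0$ with $f'(j_0,j_0)=(-c)^{1/2}$, the previous theorem yields a repetitive $c$-frieze $f$ of order $n-1$ whose first row is obtained from that of $f'$ by deleting the entry $(-c)^{1/2}$ and subtracting the integer $(-c)^{1/2}$ from its two neighbours; the resulting first row is integral, $f$ is monotonic with $s\in\mathbb{Z}$ by the same square-root computation, and the remark again upgrades this to integrality of all of $f$. The only point needing care, and the sole place where the new hypothesis intervenes, is the step guaranteeing $(-c)^{1/2}\in\mathbb{Z}$ and $s'\in\mathbb{Z}$; everything else is inherited verbatim from the previous theorem and the integrality remark, so I expect no genuine obstacle here.
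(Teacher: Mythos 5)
Your proposal is correct, but its second half takes a genuinely different route from the paper. The shared core is the first step: since $(-c)$ is a perfect square, $(-c)^{1/2}\in\mathbb{Z}$, so the transformation of lemma~\ref{nn+1} carries an integral $c$-polygonal sequence to an integral one (insertion of $(-c)^{1/2}$ and addition of $(-c)^{1/2}$ to the two flanking entries), and conversely for the deletion; as you implicitly use, periodicity of repetitive friezes (corollary~\ref{periodantiperiod}) then spreads integrality of the $n+4$ consecutive entries to the entire first row. Where you diverge is in upgrading first-row integrality to integrality of the whole frieze: the paper simply invokes lemma~\ref{frisep}~c), the determinant formula expressing every $f(i,j)$ as a determinant whose entries are the integers $x_i$, $-c$ and $1$, which is immediate and needs nothing beyond $c\in\mathbb{Z}$. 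You instead route through the frieze of integers theorem~\ref{entier} and its accompanying remark (a monotonic frieze is integral if and only if its first row is), which obliges you to verify the hypothesis $s'\in\mathbb{Z}$; your computation via lemma~\ref{st} is correct, since for a repetitive frieze of order $n+1$ one has $s'=t'$, hence $(s')^2=(-c)^{n+2}$ and $s'=\pm\bigl((-c)^{1/2}\bigr)^{n+2}\in\mathbb{Z}$, and $c<0$ holds automatically because $-c$ is a positive perfect square. Both routes are sound; the paper's is shorter and more elementary, while yours makes explicit the integrality of the penultimate row. One caveat: the direction of the remark you cite (first row integral implies frieze integral) is itself justified in the paper essentially by the same determinant lemma, so your argument is not logically independent of the paper's --- it cites the packaged consequence where the paper cites the underlying lemma directly.
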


\begin{proof}
For the first part, by hypothesis, $x_1, x_2, x_3, \dots, x_{n+3}$ are all integers. Using the same transformations as in theorem~\ref{nn+1}, the resulting $c-$polygonal sequence of order $n+1$ consists only of integers, because $(-c)$ is the square of an integer. Then, by lemma~\ref{frisep} c), the associated friezes consist only of integers.

The arguments are similar for the second part.
\end{proof}

\theoremstyle{remark}\newtheorem*{Injectivity}{Remarks}
\begin{Injectivity}
As mentioned before, all friezes of positive integers in [CC73] have $1$ in their first row. Thus, it is possible to construct all $(-1)-$friezes of positive integers of order $n$ from $(-1)-$friezes of positive integers of order $n+1$, as mentioned in [CC73] p.179. %*It is also the case for alternating friezes of nonzero integers*.
\end{Injectivity}

\theoremstyle{remark}\newtheorem{ExampleNoSurjection}[exFrise]{Example}
\begin{ExampleNoSurjection}
Consider the following $(-4)-$frieze of positive integers of order $n=2$, with $s=t=8$:

\[\ldots \begin{array}{ccccccccccccccccc}
& 0 && 0 && 0 && 0 && 0 && 0 && 0 && 0\\
&& 1 && 1 && 1 && 1 && 1 && 1 && 1\\
& 1 && 6 && 6 && 1 && 16 && 1 && 6 && 6 \\
&& 2 && 32 && 2 && 12 && 12 && 2 && 32 && \\
& 8 && 8 && 8 && 8 && 8 && 8 && 8 && 8 \\
&& 0 && 0 && 0 && 0 && 0 && 0 && 0
\end{array} \ldots\]
\\
Using the transformation $\Gamma$ of theorem~\ref{nn+1}, it is possible to construct $(-4)-$ induced friezes of positive integers of order $n=3$ from the one given above. A resulting frieze, using $\psi_{-4}^{2}=(1,6,6,1,16)$ is:

\[\ldots \begin{array}{ccccccccccccccccc}
& 0 && 0 && 0 && 0 && 0 && 0 && 0 && 0\\
&& 1 && 1 && 1 && 1 && 1 && 1 && 1\\
& 3 && 6 && 6 && 1 && 18 && 2 && 3 && 6 \\
&& 14 && 32 && 2 && 14 && 32 && 2 && 14 &&\\
& 4 && 72 && 8 && 12 && 24 && 24 && 4 && 72 \\
&& 16 && 16 && 16 && 16 && 16 && 16 && 16 \\
& 0 && 0 && 0 && 0 && 0 && 0 && 0 && 0
\end{array} \ldots \]
\\
However, since the original $(-4)-$frieze of order $n=2$ is not $(-4)-$induced, it is not possible to find a $(-4)-$frieze of order $n=1$ with the arguments of theorem~\ref{nn+1}.
\end{ExampleNoSurjection}

\section*{Acknowledgements}
The authors thank Prof. Ibrahim Assem for his support, guidance and advice. The first and third authors gratefully acknowledge partial support from an undergraduate research fellowship from NSERC and from Prof. Ibrahim Assem. The second author gratefully acknowledges partial support from an undergraduate research fellowship from the Facult\'{e} des sciences of the Universit\'{e} de Sherbrooke and from Prof. Ibrahim Assem. 

\nocite{*}
\bibliography{c-friezesbib}
\bibliographystyle{alpha}

%[AD] : http://prospero.dmi.usherb.ca/ibrahim/publications/Friezes_and_a_construction_of_the_Euclidean_cluster_variables.pdf
%[ADSS] : http://prospero.dmi.usherb.ca/ibrahim/publications/Friezes_strings_and_cluster_variables.pdf
%[AR] : http://prospero.dmi.usherb.ca/ibrahim/publications/Mutating_seeds_types_A_and_%C3%83.pdf
%[ARS] : http://prospero.dmi.usherb.ca/ibrahim/publications/Friezes.pdf
%[ASS] : http://books.google.ca/books?id=ayNHpi3tYhQC&printsec=frontcover#v=onepage&q&f=false
%[CC] : http://www.link.cs.cmu.edu/15859-s11/notes/frieze-patterns-gazette.pdf
%[CD] : voir "http://prospero.dmi.usherb.ca/ibrahim/publications/Friezes_strings_and_cluster_variables.pdf" (ADSS)
%[Dup] : http://arxiv.org/pdf/0801.3964v2.pdf
%[FZ01] : http://arxiv.org/pdf/math/0104241v1.pdf
%[FZ02] : http://arxiv.org/pdf/math/0104151v1.pdf
%[FZ03] : 
%[GLS] : http://arxiv.org/pdf/1208.5749v1.pdf
%[Kel] : http://arxiv.org/pdf/0807.1960v11.pdf
%[Lam] : http://arxiv.org/pdf/1101.0580v2.pdf
%[Lau] : http://dspace.mit.edu/bitstream/handle/1721.1/31163/61214348.pdf?sequence=1
%[Lec] : http://arxiv.org/pdf/1009.4552v1.pdf
%[Lus] : http://books.google.ca/books?hl=en&lr=&id=C89Kg-d7sgsC&oi=fnd&pg=PA133&dq=lusztig+total+positivity&ots=d4t1SzUuVJ&sig=Zft47kkD_5bg04DgVqZuNsp5PaQ#v=onepage&q=lusztig%20total%20positivity&f=false
%[MSW] : http://arxiv.org/pdf/0906.0748v1.pdf
%[Zha] : http://arxiv.org/pdf/math/0510288v1.pdf

\end{document}